\documentclass[11pt]{article}
\usepackage[a4paper,margin=1.15in]{geometry}
\usepackage[T1]{fontenc}
\usepackage{amsmath,amssymb,amsthm,mathtools}
\usepackage{needspace}
\usepackage{xcolor}
\usepackage[colorlinks=true,linkcolor=blue,citecolor=blue,urlcolor=blue]{hyperref}
\numberwithin{equation}{section}
\newtheorem{theorem}{Theorem}[section]
\newtheorem{proposition}[theorem]{Proposition}
\newtheorem{corollary}[theorem]{Corollary}
\newtheorem{lemma}[theorem]{Lemma}
\newtheorem{remark}[theorem]{Remark}
\newtheorem{definition}[theorem]{Definition}
\newtheorem{example}[theorem]{Example}

\theoremstyle{remark}

\DeclareMathOperator{\supp}{supp}

\title{Worst-Case Relative Growth of Functionally Generated Portfolios under Restricted Market-Weight Dynamics}
\author{Jilong Xu\thanks{\raggedright School of Mathematics, Nanjing University, Nanjing 210093, P. R. China. Email: \href{mailto:xujilongnj@smail.nju.edu.cn}{\texttt{xujilongnj@smail.nju.edu.cn}}.}
\and Xiaojun Cui\thanks{\raggedright Corresponding author. School of Mathematics, Nanjing University, Nanjing 210093, P. R. China. Email: \href{mailto:xcui@nju.edu.cn}{\texttt{xcui@nju.edu.cn}}.}}
\date{Version v1}
\newcommand{\printfunding}{%
\begin{NoHyper}
\renewcommand{\thefootnote}{}
\footnotetext{\raggedright\textbf{Funding:} Xiaojun Cui and Jilong Xu are supported by the National Natural Science
Foundation of China (Grant No. 12171234), the Project Funded by the
Priority Academic Program Development of Jiangsu Higher Education
Institutions (PAPD) and the Fundamental Research Funds for the Central
Universities.}
\end{NoHyper}
}

\begin{document}
\maketitle
\printfunding
\begin{abstract}
We study the worst-case relative growth rate of a fixed functionally
generated portfolio under restrictions on market-weight dynamics over
a compact convex market state space. For a nonempty compact forward
shift-invariant path family, the pathwise wealth decomposition identifies
the global rate with the minimum invariant average of the one-step
\(L\)-divergence. For paths generated by a compact one-step transition
relation, this rate is also the minimum over balanced transition measures,
and a minimizing measure can be chosen on one relation basic set.

For a fixed initial state, we study the lower limit of the minimum
average \(L\)-divergence over the first \(t\) periods as \(t\to\infty\).
We represent this value using empirical transition laws generated from
that state. Their common state marginals are supported on chain-recurrent
states that are chain-accessible from the initial state. The lower bound
is obtained by minimizing the average \(L\)-divergence over balanced
transition measures on chain-accessible basic sets. Finite exact
reachability gives an upper bound. Examples show that both bounds can be
strict. We also give a finite exact reachability condition under which
the minimum average \(L\)-divergence over the first \(t\) periods converges
for paths confined to a basic set.
\end{abstract}

\noindent\textbf{2020 Mathematics Subject Classification.} Primary 37B20; Secondary 91G10, 37A05.

\noindent\textbf{Keywords.} restricted market-weight dynamics; invariant measures; chain recurrence; stochastic portfolio theory; functionally generated portfolios; worst-case relative growth rate; \(L\)-divergence.

\section{Introduction}

Stochastic portfolio theory (SPT) studies portfolio performance relative to the market through the dynamics of market weights.  We work within its functional-generation framework \cite{Fernholz2002,FernholzKaratzas2009,KaratzasRuf2017,Wong2015Optimization} and study the following evaluation problem: for a \emph{fixed} functionally generated portfolio and a prescribed family of admissible market-weight paths, what is its worst-case relative growth rate?  We distinguish the global problem, in which the initial state is allowed to vary, from the problem with a fixed initial state.  A positive global rate guarantees that, for all sufficiently large horizons, the average logarithmic growth relative to the market is bounded below by a positive constant uniformly over the admissible paths.

This question is complementary to robust portfolio optimization under probabilistic model uncertainty.  Kardaras and Robertson optimize asymptotic growth over trading strategies against a class of probability laws with prescribed state space, instantaneous covariance, and limiting occupancy density \cite{KardarasRobertson2021}.  Itkin and Larsson impose long-only constraints and optimize over concave functionally generated portfolios within a related framework \cite{ItkinLarsson2022}.  Here, the portfolio is fixed in advance, and uncertainty is represented by the set of market paths declared admissible.  The formulation is deterministic: no probabilities are assigned to paths or to the possible successors of a state.  Continuous-time pathwise and rough-path formulations provide related motivation for dispensing with a fixed probabilistic law \cite{AllanCuchieroLiuPromel2023,KaratzasKim2020}.

Let \(K\subset\Delta_n\) be a nonempty compact convex restricted market state space in the open simplex, contained in a relative-open convex neighborhood \(U\subset\Delta_n\).  Let \(\Phi:U\to(0,\infty)\) be \(C^1\) and concave, set \(\phi=\log\Phi\), and let \(\pi\) be the associated functionally generated portfolio.  At the general level, admissible paths form a nonempty compact forward shift-invariant set \(\Omega\subset K^{\mathbb N_0}\), with left shift \(\sigma\).  The discrete pathwise wealth decomposition \cite[Lemma~7]{PalWong2016Geometry} gives
\begin{equation}\label{eq:intro-wealth-decomposition}
        \log V_\pi(t,\omega)
        =
        \phi(\omega_t)-\phi(\omega_0)
        +
        \sum_{k=0}^{t-1}T_\phi(\omega_{k+1}\mid\omega_k).
\end{equation}
Here \(T_\phi(q\mid p)\) is the continuous nonnegative one-step \(L\)-divergence associated with the transition from \(p\) to \(q\) \cite{PalWong2016Geometry,PalWong2018ExpConcave}.  The endpoint term is uniformly bounded on \(K\), so the global rate
\begin{equation}\label{eq:central-problem}
        \gamma(\Phi,\Omega)
        :=
        \lim_{t\to\infty}
        \min_{\omega\in\Omega}
        \frac1t\log V_\pi(t,\omega)
\end{equation}
is governed by the one-step cost
\begin{equation}\label{eq:intro-drift-function}
        f_\phi(\omega):=T_\phi(\omega_1\mid\omega_0).
\end{equation}
Theorem~\ref{thm:invariant-L-divergence-representation} shows that the limit exists and that
\begin{equation}\label{eq:intro-main-identity}
        \gamma(\Phi,\Omega)
        =
        \min_{\nu\in\mathcal M_\sigma(\Omega)}
        \int_\Omega f_\phi\,d\nu.
\end{equation}
Here \(\mathcal M_\sigma(\Omega)\) denotes the shift-invariant Borel probability measures on \(\Omega\).  This identity is the minimizing form of the standard ergodic-optimization relation between finite-horizon averages and invariant averages \cite[Proposition~2.1]{Jenkinson2006}.  Related invariant/time-average equivalences hold for upper-semicontinuous multivalued systems on compact metrizable spaces \cite[Theorem~3.3]{JenkinsonLiLiaoZhang2025}.

In the main setting, paths are generated by a compact one-step transition relation \(\Gamma\subset K\times K\). A path \(\omega\in K^{\mathbb N_0}\) belongs to \(\Omega_\Gamma\) if and only if \((\omega_k,\omega_{k+1})\in\Gamma\) for every \(k\ge0\). We assume \(\Omega_\Gamma\ne\varnothing\). A general restricted path space \(\Omega\) may impose additional constraints on whole paths. For \(\Omega_\Gamma\), invariant path measures correspond to balanced transition measures on \(\Gamma\), whose current-state and next-state marginals coincide. Write \(\mathcal I(\Gamma)\) for this class and \(\mathcal B(\Gamma)\) for the family of relation basic sets. For each \(B\in\mathcal B(\Gamma)\), let \(\Gamma_B:=\Gamma\cap(B\times B)\), and let \(\mathcal I(\Gamma_B)\) denote the balanced transition measures on \(\Gamma_B\). Define
\[
\beta_\phi(B):=
\min_{\lambda\in\mathcal I(\Gamma_B)}
\int_{\Gamma_B}T_\phi(q\mid p)\,d\lambda(p,q).
\]
The transition-measure representation (Theorem~\ref{thm:transition-measure-form}) and the basic-set minimum formula \eqref{eq:global-basic-set-minimum} give
\[
        \gamma(\Phi,\Omega_\Gamma)
        =
        \min_{\lambda\in\mathcal I(\Gamma)}
        \int_\Gamma T_\phi(q\mid p)\,d\lambda(p,q)
        =
        \min_{B\in\mathcal B(\Gamma)}\beta_\phi(B).
\]
In particular, a global minimizing balanced measure can be chosen on one basic set. The representation by invariant path measures and recurrent localization use the closed-relation framework of Miller and Akin \cite{MillerAkin1999}.

The global minimization in \eqref{eq:intro-main-identity} does not fix the initial state.  Fix \(p\in K_\Gamma^\infty\), so that at least one infinite admissible path starts from \(p\), and write \(\Omega_\Gamma(p)=\{\omega\in\Omega_\Gamma:\omega_0=p\}\).  Define
\[
        \mathcal V_t^\phi(p)
        :=
        \min_{\omega\in\Omega_\Gamma(p)}
        \sum_{k=0}^{t-1}T_\phi(\omega_{k+1}\mid\omega_k),
        \qquad
        \underline\alpha_\phi(p)
        :=
        \liminf_{t\to\infty}\frac{\mathcal V_t^\phi(p)}{t}.
\]
By the same endpoint bound, \(\underline\alpha_\phi(p)\) is also the lower limit, as \(t\to\infty\), of the minimum of \(t^{-1}\log V_\pi(t,\omega)\) over \(\omega\in\Omega_\Gamma(p)\).  The set \(\Omega_\Gamma(p)\) of admissible paths starting from \(p\) is generally not shift-invariant, and a finite-horizon minimizing path may depend on the horizon.  Thus, the global limit argument does not by itself give convergence from a fixed state or identify this lower rate with the infimum of long-run averages along single infinite paths.  The fixed-state formulation is related to the occupational-measure approach to discrete-time average-cost control \cite{BorkarGaitsgoryShvartsman2019}.

We represent the lower asymptotic rate \(\underline\alpha_\phi(p)\) through empirical transition laws generated from \(p\).  These are weak limits of
\[
        L_t^\omega
        :=
        \frac1t\sum_{k=0}^{t-1}\delta_{(\omega_k,\omega_{k+1})}
\]
along horizons \(t_j\to\infty\) and paths \(\omega^{(j)}\in\Omega_\Gamma(p)\), with the path allowed to vary with \(j\).  Writing \(\mathfrak A_\Gamma(p)\) for this class, we prove
\[
        \underline\alpha_\phi(p)
        =
        \min_{\lambda\in\mathfrak A_\Gamma(p)}
        \int_\Gamma T_\phi(y\mid x)\,d\lambda(x,y).
\]
Each such law is balanced, and its common state marginal is supported on recurrent states that are chain-accessible from \(p\). This gives a lower bound given by the minimum of \(\beta_\phi(B)\) over chain-accessible basic sets.

For a basic set \(B\) and \(q\in B\), let \(\Omega_{\Gamma_B}(q)\) denote the admissible paths starting from \(q\) and remaining in \(B\). The minimum accumulated \(L\)-divergence over the first \(t\) periods is
\[
\mathcal V_{t,B}^\phi(q)
:=\min_{\omega\in\Omega_{\Gamma_B}(q)}
\sum_{k=0}^{t-1}T_\phi(\omega_{k+1}\mid\omega_k).
\]
The corresponding lower asymptotic rate is
\[
\underline\alpha_{\phi,B}(q)
:=\liminf_{t\to\infty}\frac{\mathcal V_{t,B}^\phi(q)}{t}.
\]
For the complementary upper bound, if \(q\in B\) is finitely exactly reachable from \(p\), then
\[
\underline\alpha_\phi(p)\le\underline\alpha_{\phi,B}(q).
\]

Both bounds can be strict.
In particular, identifying the unique chain-accessible basic set
need not determine the empirical transition laws generated from the
current state; see Appendix~\ref{app:component-pressure-tests}.

The upper bound involves the restricted rate \(\underline\alpha_{\phi,B}(q)\). We seek conditions under which this equals \(\beta_\phi(B)\), allowing both bounds to be expressed using the same quantities. In fact, we give a sufficient condition for the stronger conclusion that \(\mathcal V_{t,B}^\phi(q)/t\) converges to \(\beta_\phi(B)\).
It suffices that \(q\) can reach a state \(x\in B\) through finitely
many exact \(\Gamma_B\)-transitions, and that an infinite admissible
path starting from \(x\) and remaining in \(B\) has an
\(L\)-divergence average converging to \(\beta_\phi(B)\).
Pairwise finite exact reachability inside \(B\) is a sufficient
condition for this conclusion at every state of \(B\).

Section~\ref{sec:setup} introduces the pathwise setup and the \(L\)-divergence decomposition. Section~\ref{sec:uniform-growth-invariant-obstructions} establishes the representations of the global rate. Section~\ref{sec:state-conditioned-values} represents the current-state lower asymptotic rate using empirical transition laws and derives bounds using basic sets. Section~\ref{sec:common-basic-set-rates} gives conditions for the restricted minimum average \(L\)-divergence to converge to \(\beta_\phi(B)\). Section~\ref{sec:discussion} discusses further questions. Appendix~\ref{app:component-pressure-tests} gives examples showing that both bounds can be strict. Appendix~\ref{app:three-asset-positive-growth} gives a three-asset example with a positive worst-case relative growth rate for the equal-weighted portfolio.

\section{Setup and \texorpdfstring{\(L\)}{L}-Divergence}\label{sec:setup}

Fix \(n\ge2\).  Let
\[
        \Delta_n
        :=
        \left\{
        p=(p_1,\dots,p_n)\in(0,\infty)^n:
        \sum_{i=1}^n p_i=1
        \right\}
\]
be the open unit simplex.  Its tangent space is
\[
        T\Delta_n
        :=
        \left\{
        v\in\mathbb R^n:
        \sum_{i=1}^n v_i=0
        \right\}.
\]

Throughout the paper, \(K\subset\Delta_n\) is a nonempty compact convex restricted market state space.  We assume that \(K\) is contained in an open convex set \(U\subset\Delta_n\), where openness is understood relative to the affine hyperplane containing the simplex.

For \(f\in C^1(U)\) and \(p\in U\), we use the tangential gradient \(\nabla f(p)\in T\Delta_n\), characterized by
\[
Df(p)[v]=\langle\nabla f(p),v\rangle,
\qquad v\in T\Delta_n.
\]
We write \(\partial_i f(p)\) for its \(i\)th component.

Let
\[
        K^{\mathbb N_0}
        =
        \{(\omega_0,\omega_1,\omega_2,\dots):\omega_k\in K\text{ for all }k\ge0\}
\]
be equipped with the product topology.  Then \(K^{\mathbb N_0}\) is compact and metrizable.  Fix a metric \(d_K\) compatible with the topology of \(K\).

Let
\[
        \sigma(\omega_0,\omega_1,\omega_2,\dots)
        =
        (\omega_1,\omega_2,\omega_3,\dots)
\]
be the left shift.

\begin{definition}[Restricted path space]
A restricted path space over \(K\) is a nonempty compact subset
\[
        \Omega\subset K^{\mathbb N_0}
\]
such that
\[
        \sigma(\Omega)\subset\Omega.
\]
We call this condition forward shift-invariance.
\end{definition}

The paths in \(\Omega\) are called admissible.  No deterministic one-step transition rule is assumed.

Let \(\mathcal P(\Omega)\) denote the set of Borel probability measures on \(\Omega\).

\begin{definition}[Shift-invariant measures]
A measure \(\nu\in\mathcal P(\Omega)\) is \(\sigma\)-invariant if
\[
        \int_\Omega h(\sigma\omega)\,d\nu(\omega)
        =
        \int_\Omega h(\omega)\,d\nu(\omega)
\]
for every \(h\in C(\Omega)\).  The set of all \(\sigma\)-invariant probability measures is denoted by \(\mathcal M_\sigma(\Omega)\).
\end{definition}

Since \(\Omega\) is compact and \(\sigma:\Omega\to\Omega\) is continuous, \(\mathcal M_\sigma(\Omega)\) is a nonempty compact convex subset of \(\mathcal P(\Omega)\) in the weak topology \cite[Theorem~6.5, Corollary~6.9.1, and Theorem~6.10]{Walters1982ErgodicTheory}.

\subsection{Functionally generated portfolios and \texorpdfstring{\(L\)}{L}-divergence}

We use functional generation on the restricted state space.  The underlying discrete-time wealth decomposition and \(L\)-divergence viewpoint originate in Pal and Wong's pathwise theory \cite{PalWong2016Geometry}.  The restricted-domain formulation used here is also consistent with the restricted market-state framework of Xu and Cui \cite{XuCui2026}.

Let \(\Phi:U\to(0,\infty)\) be a \(C^1\) concave function.  Set
\[
        \phi:=\log\Phi .
\]
For \(p\in K\), the functionally generated portfolio associated with \(\Phi\) is defined by
\begin{equation}\label{eq:portfolio-definition}
        \frac{\pi_i(p)}{p_i}
        =
        1+\partial_i\phi(p)-\sum_{j=1}^n p_j\partial_j\phi(p),
        \qquad i=1,\dots,n .
\end{equation}
Then \(\sum_i\pi_i(p)=1\).

For a transition from \(p\) to \(q\), with \(p,q\in K\), the one-step relative wealth factor is
\[
        \sum_{i=1}^n \pi_i(p)\frac{q_i}{p_i}
        =
        1+\langle\nabla\phi(p),q-p\rangle.
\]
The equality follows from the portfolio formula and
\(\sum_i(q_i-p_i)=0\).
Concavity and positivity of \(\Phi\) imply
\[
        \Phi(q)
        \le
        \Phi(p)+\langle\nabla\Phi(p),q-p\rangle
        =
        \Phi(p)\bigl(1+\langle\nabla\phi(p),q-p\rangle\bigr),
\]
so \(1+\langle\nabla\phi(p),q-p\rangle>0\) for all \(p,q\in K\).
The portfolio weights may be negative; the positivity just proved
ensures positive wealth along all admissible paths in the restricted
state space.

\begin{definition}[\(L\)-divergence]
For \(p,q\in K\), define
\begin{equation}\label{eq:L-divergence}
        T_\phi(q\mid p)
        :=
        \log\left(1+\langle\nabla\phi(p),q-p\rangle\right)
        -
        \bigl(\phi(q)-\phi(p)\bigr).
\end{equation}
\end{definition}

This is the restricted-domain version of the \(L\)-divergence introduced by Pal and Wong \cite{PalWong2016Geometry}.  By concavity of \(\Phi=e^\phi\), the quantity in \eqref{eq:L-divergence} is well-defined and nonnegative on \(K\times K\).  It measures the one-step excess log-return in the pathwise decomposition.

For \(\omega\in\Omega\) and \(t\ge1\), define the relative wealth of the portfolio with respect to the market over the first \(t\) periods along \(\omega\) by
\begin{equation}\label{eq:wealth-product}
        V_\pi(t,\omega)
        :=
        \prod_{k=0}^{t-1}
        \sum_{i=1}^n \pi_i(\omega_k)
        \frac{(\omega_{k+1})_i}{(\omega_k)_i}.
\end{equation}
Thus \(\log V_\pi(t,\omega)\) is the logarithm of relative wealth, and \(t^{-1}\log V_\pi(t,\omega)\) is the average logarithmic growth relative to the market over the first \(t\) periods.  Combining the definition of \(V_\pi\) with \eqref{eq:L-divergence} gives the pathwise wealth decomposition
\begin{equation}\label{eq:wealth-decomposition}
        \log V_\pi(t,\omega)
        =
        \phi(\omega_t)-\phi(\omega_0)
        +
        \sum_{k=0}^{t-1}
        T_\phi(\omega_{k+1}\mid \omega_k).
\end{equation}

\subsection{The one-step \texorpdfstring{\(L\)}{L}-divergence function}

The wealth decomposition singles out the \(L\)-divergence associated with each transition \(p\to q\).  On the restricted path space, define
\begin{equation}\label{eq:L-divergence-function}
        f_\phi(\omega)
        :=
        T_\phi(\omega_1\mid \omega_0),
        \qquad \omega\in\Omega .
\end{equation}
Since \(\Phi\in C^1(U)\), \(K\) is compact, and the coordinate maps \(\omega\mapsto\omega_0\), \(\omega\mapsto\omega_1\) are continuous, we have \(f_\phi\in C(\Omega)\).  Moreover,
\begin{equation}\label{eq:birkhoff-transition-average}
        \frac1t\sum_{k=0}^{t-1}f_\phi(\sigma^k\omega)
        =
        \frac1t\sum_{k=0}^{t-1}
        T_\phi(\omega_{k+1}\mid\omega_k).
\end{equation}
Thus the Birkhoff average of \(f_\phi\) is exactly the average one-step \(L\)-divergence accumulated along the admissible market path \(\omega\).

The pathwise wealth decomposition can be written as
\begin{equation}\label{eq:wealth-decomposition-L-divergence}
        \log V_\pi(t,\omega)
        =
        \phi(\omega_t)-\phi(\omega_0)
        +
        \sum_{k=0}^{t-1} f_\phi(\sigma^k\omega).
\end{equation}
\section{Global Worst-Case Relative Growth}\label{sec:uniform-growth-invariant-obstructions}

We begin with the global problem, in which the current market state is not fixed.  We first represent the global rate by an invariant path-measure average.  When admissibility is generated by a compact transition relation, we pass to balanced transition measures on that relation and then localize a global minimizing measure to one relation basic set.

\subsection{Invariant-measure representation}

We first express the worst-case relative growth rate as a minimum over invariant measures. The proof uses the minimizing form of the standard ergodic-optimization identity \cite[Proposition~2.1]{Jenkinson2006}.

\begin{definition}[Worst-case relative growth rate]\label{def:worst-case-growth-rate}
For a functionally generated portfolio \(\pi\) and a restricted path space \(\Omega\), define
\begin{equation}\label{eq:worst-case-growth-rate}
        \gamma(\Phi,\Omega)
        :=
        \lim_{t\to\infty}
        \min_{\omega\in\Omega}
        \frac1t\log V_\pi(t,\omega).
\end{equation}
\end{definition}

The existence of the finite limit in \eqref{eq:worst-case-growth-rate} is not assumed a priori.  Lemma~\ref{lem:worst-case-birkhoff-averages} establishes the limit of the minimum finite-horizon average of a continuous function. Theorem~\ref{thm:invariant-L-divergence-representation} then identifies \(\gamma(\Phi,\Omega)\) with the minimum invariant average of \(f_\phi\).

\begin{lemma}[Worst-case Birkhoff averages]\label{lem:worst-case-birkhoff-averages}
Let \(f\in C(\Omega)\).  Then
\begin{equation}\label{eq:worst-case-birkhoff-variational}
        \lim_{t\to\infty}
        \frac1t
        \min_{\omega\in\Omega}
        \sum_{k=0}^{t-1}f(\sigma^k\omega)
        =
        \min_{\nu\in\mathcal M_\sigma(\Omega)}
        \int_\Omega f\,d\nu .
\end{equation}
\end{lemma}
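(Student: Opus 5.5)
Write \(S_tf(\omega):=\sum_{k=0}^{t-1}f(\sigma^k\omega)\), \(m_t:=\min_{\omega\in\Omega}S_tf(\omega)\) (attained by compactness and continuity), and \(m:=\min_{\nu\in\mathcal M_\sigma(\Omega)}\int f\,d\nu\). The minimum \(m\) is attained because \(\nu\mapsto\int f\,d\nu\) is weakly continuous and \(\mathcal M_\sigma(\Omega)\) is nonempty and compact. The plan has three steps: show that \(m_t/t\) converges, show the upper bound \(\limsup m_t/t\le m\), and show the lower bound \(\liminf m_t/t\ge m\).

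\emph{Existence of the limit.} Forward shift-invariance gives superadditivity: \(m_{s+t}\ge m_s+m_t\). Indeed, for any \(\omega\in\Omega\) we have \(S_{s+t}f(\omega)=S_sf(\omega)+S_tf(\sigma^s\omega)\) and \(\sigma^s\omega\in\Omega\). Since \(|m_t|\le t\|f\|_\infty\), Fekete's lemma shows that \(\lim m_t/t=\sup_t m_t/t\) exists and is finite. This step is not strictly needed, since the two inequalities below already force convergence, but it is cheap.

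\emph{Upper bound \(\limsup m_t/t\le m\).} Take an ergodic minimizer. The minimizing set in \(\mathcal M_\sigma(\Omega)\) is a nonempty compact face, so its extreme points are ergodic. Alternatively, use the ergodic decomposition. Let \(\nu\) be such an ergodic minimizer. By Birkhoff's theorem, \(S_tf(\omega)/t\to\int f\,d\nu=m\) for \(\nu\)-a.e. \(\omega\in\Omega\). Fix one such \(\omega\). Then \(m_t/t\le S_tf(\omega)/t\to m\).

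An ergodic-free route also works. For any invariant \(\nu\), integrate: \(m_t\le\int S_tf\,d\nu=t\int f\,d\nu\). Taking \(\nu\) to be a minimizer gives \(m_t\le tm\) directly, and this is the version I would actually use.

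\emph{Lower bound \(\liminf m_t/t\ge m\).} This is the main step, handled by the standard Krylov--Bogolyubov construction. Choose \(t_j\to\infty\) realizing the \(\liminf\), and choose minimizers \(\omega^{(j)}\) with \(S_{t_j}f(\omega^{(j)})=m_{t_j}\). Form the empirical measures
\[
\mu_j:=\frac1{t_j}\sum_{k=0}^{t_j-1}\delta_{\sigma^k\omega^{(j)}}\in\mathcal P(\Omega).
\]
This is legitimate because \(\sigma^k\omega^{(j)}\in\Omega\). By compactness of \(\mathcal P(\Omega)\), pass to a subsequence along which \(\mu_j\to\mu\) weakly.

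To see that \(\mu\) is invariant, note that for \(h\in C(\Omega)\) the sum telescopes:
\[
\Bigl|\int h\circ\sigma\,d\mu_j-\int h\,d\mu_j\Bigr|\le \frac{2\|h\|_\infty}{t_j}\to0.
\]
Since \(h\circ\sigma\) is continuous, the limit gives \(\mu\in\mathcal M_\sigma(\Omega)\). By weak convergence and continuity of \(f\),
\[
\frac{m_{t_j}}{t_j}=\int f\,d\mu_j\to\int f\,d\mu\ge m.
\]

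Combining the two bounds, \(m\le\liminf m_t/t\le\limsup m_t/t\le m\), which proves \eqref{eq:worst-case-birkhoff-variational}. The only delicate point is ensuring \(\mu\) is supported in \(\Omega\) and invariant. Both follow from compactness of \(\Omega\) and continuity of \(\sigma\) on \(\Omega\), so no serious obstacle is expected.
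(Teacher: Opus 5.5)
Your proposal is correct and follows essentially the same route as the paper: Fekete's lemma via superadditivity from forward shift-invariance, the upper bound obtained by integrating \(S_tf\) against an invariant minimizer, and the lower bound from weak limits of empirical measures along minimizing paths, shown invariant by telescoping. The only cosmetic difference is that you choose \(t_j\) realizing the lower limit, which makes Fekete optional, whereas the paper uses Fekete first and then works along any \(t_j\to\infty\); the ergodic-minimizer variant you mention is not needed.
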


\begin{proof}
For each fixed \(t\), the function \(\omega\mapsto\sum_{k=0}^{t-1}f(\sigma^k\omega)\) is continuous on compact \(\Omega\), so its minimum is attained. By forward shift-invariance, the sequence
\[
\left(\min_{\omega\in\Omega}\sum_{k=0}^{t-1}f(\sigma^k\omega)\right)_{t\ge1}
\]
is superadditive.  Since \(f\) is bounded on compact \(\Omega\), Fekete's lemma gives a finite limit after division by \(t\).

For every \(\nu\in\mathcal M_\sigma(\Omega)\), shift invariance gives
\[
\frac1t\min_{\omega\in\Omega}\sum_{k=0}^{t-1}f(\sigma^k\omega)
\le
\int_\Omega \frac1t\sum_{k=0}^{t-1}f(\sigma^k\omega)\,d\nu(\omega)
=
\int_\Omega f\,d\nu,
\]
so the limit is no larger than the minimum invariant average.

For the reverse inequality, choose a minimizing path \(\omega^{(j)}\in\Omega\) for each \(t_j\to\infty\), and set
\[
\mu_j:=\frac1{t_j}\sum_{k=0}^{t_j-1}\delta_{\sigma^k\omega^{(j)}}.
\]
After passing to a subsequence, compactness of \(\mathcal P(\Omega)\) gives \(\mu_j\Longrightarrow\mu\).  For every \(h\in C(\Omega)\),
\[
\int_\Omega(h\circ\sigma-h)\,d\mu_j
=
\frac{h(\sigma^{t_j}\omega^{(j)})-h(\omega^{(j)})}{t_j}
\longrightarrow0,
\]
hence \(\mu\in\mathcal M_\sigma(\Omega)\).  Moreover,
\[
\int_\Omega f\,d\mu
=
\lim_{j\to\infty}\frac1{t_j}\min_{\omega\in\Omega}\sum_{k=0}^{t_j-1}f(\sigma^k\omega)
=
\lim_{t\to\infty}\frac1t\min_{\omega\in\Omega}\sum_{k=0}^{t-1}f(\sigma^k\omega).
\]
Thus the minimum invariant average is no larger than the finite-horizon limit, proving \eqref{eq:worst-case-birkhoff-variational}.
\end{proof}

\begin{theorem}[Invariant \(L\)-divergence representation]\label{thm:invariant-L-divergence-representation}
Let \(K\subset\Delta_n\) be compact and convex, let \(\Omega\) be a restricted path space over \(K\), and let \(\Phi\) be a positive \(C^1\) concave generating function on a relative-open convex neighborhood of \(K\).  Set \(\phi=\log\Phi\), and let \(\pi\) be the associated functionally generated portfolio.  Then
\begin{equation}\label{eq:growth-rate-variational}
        \gamma(\Phi,\Omega)
        =
        \min_{\nu\in\mathcal M_\sigma(\Omega)}
        \int_\Omega f_\phi\,d\nu .
\end{equation}
\end{theorem}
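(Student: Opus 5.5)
The plan is to reduce the statement to Lemma~\ref{lem:worst-case-birkhoff-averages} with \(f=f_\phi\), using the wealth decomposition \eqref{eq:wealth-decomposition-L-divergence}. First I will record that \(f_\phi\in C(\Omega)\). This holds because \(\Phi\in C^1(U)\) is positive, so \(\phi\) and \(\nabla\phi\) are continuous on \(K\). In addition, \(1+\langle\nabla\phi(p),q-p\rangle\) is continuous and strictly positive on the compact set \(K\times K\), hence bounded below by a positive constant, so its logarithm is continuous. Lemma~\ref{lem:worst-case-birkhoff-averages} then gives that
\[
a_t:=\frac1t\min_{\omega\in\Omega}\sum_{k=0}^{t-1}f_\phi(\sigma^k\omega)
\]
converges to \(\min_{\nu\in\mathcal M_\sigma(\Omega)}\int_\Omega f_\phi\,d\nu\). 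The minimum over \(\nu\) is attained because \(\mathcal M_\sigma(\Omega)\) is weak-compact and \(\nu\mapsto\int f_\phi\,d\nu\) is continuous.

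Next, I will control the endpoint term. Set \(M:=\max_{p,q\in K}|\phi(q)-\phi(p)|<\infty\), which is finite by continuity of \(\phi\) on compact \(K\). By \eqref{eq:wealth-decomposition-L-divergence}, for every \(\omega\in\Omega\) and \(t\ge1\),
\[
\Bigl|\log V_\pi(t,\omega)-\sum_{k=0}^{t-1}f_\phi(\sigma^k\omega)\Bigr|\le M .
\]
The map \(\omega\mapsto\log V_\pi(t,\omega)\) is continuous, since it is a finite product of continuous positive factors in finitely many coordinates. So \(\min_{\omega\in\Omega}\frac1t\log V_\pi(t,\omega)\) is attained. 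Taking minima on both sides, which preserves a uniform additive bound, yields
\[
\Bigl|\min_{\omega\in\Omega}\tfrac1t\log V_\pi(t,\omega)-a_t\Bigr|\le \frac{M}{t}.
\]

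Letting \(t\to\infty\) shows that the limit defining \(\gamma(\Phi,\Omega)\) in \eqref{eq:worst-case-growth-rate} exists, is finite, and equals \(\lim a_t\). This is the right-hand side of \eqref{eq:growth-rate-variational}. No step is a serious obstacle, since the substantive ergodic-optimization content is already in Lemma~\ref{lem:worst-case-birkhoff-averages}. The only point that needs care is justifying continuity of \(f_\phi\), specifically the uniform positivity of the wealth factor on \(K\times K\), which follows from the concavity inequality established before the definition of the \(L\)-divergence.
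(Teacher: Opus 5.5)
Your proposal is correct and follows the paper's proof essentially step for step. You bound the endpoint term of the wealth decomposition uniformly by \(\max_K\phi-\min_K\phi\), pass that bound to the minima, and invoke Lemma~\ref{lem:worst-case-birkhoff-averages} with \(f=f_\phi\); your added checks of the continuity of \(f_\phi\) and the attainment of the minima are details the paper records in Section~\ref{sec:setup}.
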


\begin{proof}
By the wealth decomposition \eqref{eq:wealth-decomposition-L-divergence},
\[
\left|
\min_{\omega\in\Omega}\frac1t\log V_\pi(t,\omega)
-
\frac1t\min_{\omega\in\Omega}\sum_{k=0}^{t-1}f_\phi(\sigma^k\omega)
\right|
\le
\frac{\max_{p\in K}\phi(p)-\min_{p\in K}\phi(p)}{t}.
\]
The endpoint bound tends to zero, and Lemma~\ref{lem:worst-case-birkhoff-averages} identifies the limit of the accumulated \(L\)-divergence minima with the minimum invariant average.  This proves \eqref{eq:growth-rate-variational}.
\end{proof}

\subsection{Relation-generated paths and balanced transition measures}\label{sec:transition-level-obstructions-and-certificates}

We now specialize to a compact transition relation \(\Gamma\subset K\times K\), which allows the global problem to be represented directly on the transition space \(\Gamma\).

Define
\begin{equation}\label{eq:Omega-Gamma}
        \Omega_\Gamma
        :=
        \{\omega\in K^{\mathbb N_0}:(\omega_k,\omega_{k+1})\in\Gamma
        \text{ for all }k\ge0\}.
\end{equation}
Throughout the remainder of this section, \(\Gamma\subset K\times K\) is compact and \(\Omega_\Gamma\neq\varnothing\).  Thus \(\Omega_\Gamma\) is a compact forward shift-invariant restricted path space.
For the closed-relation arguments below, we also use the bilateral sample-path space
\begin{equation}\label{eq:bilateral-path-space}
        X_\Gamma
        :=
        \left\{
        \widehat\omega\in K^{\mathbb Z}:
        (\widehat\omega_k,\widehat\omega_{k+1})\in\Gamma
        \text{ for every }k\in\mathbb Z
        \right\}.
\end{equation}
Compactness and \(\Omega_\Gamma\ne\varnothing\) imply that \(X_\Gamma\ne\varnothing\).
Let \(s_\Gamma:X_\Gamma\to X_\Gamma\) be the bilateral shift,
\[
        s_\Gamma((\widehat\omega_k)_{k\in\mathbb Z})
        :=
        (\widehat\omega_{k+1})_{k\in\mathbb Z}.
\]
We use one projection convention throughout:
\[
\operatorname{pr}_k^+:\Omega_\Gamma\to K,\quad \operatorname{pr}_k^+(\omega)=\omega_k\quad(k\ge0),
\qquad
\operatorname{pr}_k^{\mathbb Z}:X_\Gamma\to K,\quad \operatorname{pr}_k^{\mathbb Z}(\widehat\omega)=\widehat\omega_k\quad(k\in\mathbb Z),
\]
and write
\[
\operatorname{pr}_{0,1}^+:=(\operatorname{pr}_0^+,\operatorname{pr}_1^+):\Omega_\Gamma\to\Gamma,
\qquad
\operatorname{pr}_{0,1}^{\mathbb Z}:=(\operatorname{pr}_0^{\mathbb Z},\operatorname{pr}_1^{\mathbb Z}):X_\Gamma\to\Gamma.
\]
We write \(\mathcal M_{s_\Gamma}(X_\Gamma)\) for the set of \(s_\Gamma\)-invariant Borel probability measures on \(X_\Gamma\).  On the edge space \(\Gamma\), \(\mathrm{pr}_1,\mathrm{pr}_2:\Gamma\to K\) always denote the two coordinate projections.

\begin{definition}[Balanced transition measures]\label{def:balanced-transition-measures}
Let \(\mathcal I(\Gamma)\) be the set of Borel probability measures \(\lambda\) on \(\Gamma\) such that
\begin{equation}\label{eq:balanced-transition-condition}
        (\mathrm{pr}_1)_\#\lambda
        =
        (\mathrm{pr}_2)_\#\lambda .
\end{equation}
The elements of \(\mathcal I(\Gamma)\) are called \emph{balanced transition measures}.
\end{definition}

The balance condition \eqref{eq:balanced-transition-condition} is equivalent to
\begin{equation}\label{eq:balanced-transition-test-functions}
        \int_\Gamma h(p)\,d\lambda(p,q)
        =
        \int_\Gamma h(q)\,d\lambda(p,q),
        \qquad h\in C(K).
\end{equation}
Thus \(\lambda\) is a probability law on admissible one-step transitions whose current-state and next-state marginals coincide.  Its common marginal satisfies the invariant-measure condition for the closed relation in \cite[Theorem~3.2, condition~(3)]{MillerAkin1999}.  The following lemma lifts each balanced transition measure to a bilateral shift-invariant path measure with the prescribed \((0,1)\)-marginal.
A related graph-system formulation appears in \cite[Lemma~4.10]{JenkinsonLiLiaoZhang2025}.

\begin{lemma}[Bilateral representation of balanced measures]\label{lem:bilateral-path-transition-correspondence}
The map \(\widehat\nu\mapsto(\operatorname{pr}_{0,1}^{\mathbb Z})_\#\widehat\nu\) sends \(\mathcal M_{s_\Gamma}(X_\Gamma)\) onto \(\mathcal I(\Gamma)\).  Equivalently,
\begin{equation}\label{eq:bilateral-path-transition-surjection}
        (\operatorname{pr}_{0,1}^{\mathbb Z})_\#\mathcal M_{s_\Gamma}(X_\Gamma)
        =
        \mathcal I(\Gamma).
\end{equation}
\end{lemma}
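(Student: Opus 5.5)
The inclusion \((\operatorname{pr}_{0,1}^{\mathbb Z})_\#\mathcal M_{s_\Gamma}(X_\Gamma)\subset\mathcal I(\Gamma)\) is direct. Let \(\widehat\nu\) be \(s_\Gamma\)-invariant and set \(\lambda:=(\operatorname{pr}_{0,1}^{\mathbb Z})_\#\widehat\nu\). Then \(\lambda\) is supported on \(\Gamma\), because every \(\widehat\omega\in X_\Gamma\) satisfies \((\widehat\omega_0,\widehat\omega_1)\in\Gamma\). For \(h\in C(K)\) we have \(\int h(q)\,d\lambda=\int h\circ\operatorname{pr}_1^{\mathbb Z}\,d\widehat\nu=\int h\circ\operatorname{pr}_0^{\mathbb Z}\circ s_\Gamma\,d\widehat\nu=\int h\circ\operatorname{pr}_0^{\mathbb Z}\,d\widehat\nu=\int h(p)\,d\lambda\). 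This is the balance condition \eqref{eq:balanced-transition-test-functions}.

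For surjectivity, fix \(\lambda\in\mathcal I(\Gamma)\) with common marginal \(m\). Disintegrate \(\lambda(dp,dq)=m(dp)\,P(p,dq)\) and build the stationary Markov chain with initial law \(m\) and kernel \(P\). Because \(K\) is a compact metric space, a measurable disintegration kernel \(P\) exists. For each \(N\), define a measure \(\mu_N\) on \(K^{\{-N,\dots,N\}}\) by
\[
\mu_N(dx_{-N},\dots,dx_N)=m(dx_{-N})P(x_{-N},dx_{-N+1})\cdots P(x_{N-1},dx_N).
\]
Since \(m P=m\) (this is exactly the balance condition), every one-time marginal of \(\mu_N\) equals \(m\). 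Consequently the family \((\mu_N)\) is consistent: integrating out the two extreme coordinates of \(\mu_{N+1}\) gives \(\mu_N\). On the left end this uses \(mP=m\); on the right end it uses that \(P\) is a probability kernel. Kolmogorov's extension theorem on the compact metrizable space \(K^{\mathbb Z}\) then yields a measure \(\widehat\nu\). It is \(s_\Gamma\)-invariant because the finite-dimensional laws are shift-invariant: each \(\mu_N\) depends only on window length, starts from \(m\), and has a time-homogeneous kernel. Its \((0,1)\)-marginal is \(m(dp)P(p,dq)=\lambda\).

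The main obstacle is showing \(\widehat\nu(X_\Gamma)=1\). Since \(\lambda(\Gamma)=1\), for each \(k\) the pair \((\widehat\omega_k,\widehat\omega_{k+1})\) has law \(\lambda\) by stationarity, so it lies in \(\Gamma\) almost surely. A countable intersection over \(k\in\mathbb Z\) gives \(\widehat\nu(X_\Gamma)=1\), and \(X_\Gamma\) is closed because \(\Gamma\) is compact. Restricting \(\widehat\nu\) to \(X_\Gamma\) then gives an element of \(\mathcal M_{s_\Gamma}(X_\Gamma)\) whose image is \(\lambda\).

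If one prefers to avoid disintegration, the first alternative is a compactness route: approximate \(\lambda\) by finitely supported balanced measures on \(\Gamma\) (circulations), lift each to a periodic-orbit average in \(X_\Gamma\), and pass to a weak limit. I would use the Markov construction above, since it is cleaner.
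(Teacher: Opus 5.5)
Your proposal is correct and takes essentially the same approach as the paper. You get the forward inclusion from shift invariance of $\widehat\nu$; for surjectivity you disintegrate $\lambda=m(dp)\,P(p,dq)$, apply Kolmogorov extension to the stationary Markov finite-dimensional laws to obtain a shift-invariant $\widehat\nu$ on $K^{\mathbb Z}$ with $(0,1)$-marginal $\lambda$, and conclude $\widehat\nu(X_\Gamma)=1$ from stationarity of the $(k,k+1)$-marginals and a countable intersection (your consistency check on symmetric windows just spells out what the paper states in one line).
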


\begin{proof}
If \(\widehat\nu\in\mathcal M_{s_\Gamma}(X_\Gamma)\) and
\(\lambda=(\operatorname{pr}_{0,1}^{\mathbb Z})_\#\widehat\nu\), then \(\lambda\) is supported on \(\Gamma\), and shift invariance gives, for every \(h\in C(K)\),
\[
\int_\Gamma h(p)\,d\lambda(p,q)
=
\int_{X_\Gamma}h(\widehat\omega_0)\,d\widehat\nu
=
\int_{X_\Gamma}h(\widehat\omega_1)\,d\widehat\nu
=
\int_\Gamma h(q)\,d\lambda(p,q).
\]
Thus \(\lambda\in\mathcal I(\Gamma)\).

Conversely, let \(\lambda\in\mathcal I(\Gamma)\) and let \(m\) be its common marginal.  Disintegrate the given coupling as
\[
\lambda(dp,dq)=m(dp)\,\kappa_p(dq).
\]
Equality of the two marginals means that
\[
\int_K\kappa_p(A)\,m(dp)=\lambda(K\times A)=m(A)
\qquad\text{for every Borel }A\subseteq K.
\]
Hence \(m\) is stationary for the transition kernel \(\kappa\).  As in \cite[Theorems~3.1--3.2]{MillerAkin1999}, consider the finite-dimensional laws
\[
m(dp_a)\,\kappa_{p_a}(dp_{a+1})\cdots\kappa_{p_{b-1}}(dp_b),
\qquad a\le b.
\]
These finite-dimensional laws are consistent because \(\kappa\) is a probability kernel and \(m\) is stationary. Kolmogorov extension therefore yields a stationary law \(\widehat\nu\) on \(K^{\mathbb Z}\) with \((0,1)\)-marginal \(\lambda\). By stationarity, the \((k,k+1)\)-marginal is \(\lambda\) for every \(k\in\mathbb Z\). Since \(\lambda(\Gamma)=1\), the countable intersection of the events \(\{(\widehat\omega_k,\widehat\omega_{k+1})\in\Gamma\}\), \(k\in\mathbb Z\), has full \(\widehat\nu\)-measure; hence \(\widehat\nu(X_\Gamma)=1\). Thus \(\widehat\nu\in\mathcal M_{s_\Gamma}(X_\Gamma)\). Therefore
\(
(\operatorname{pr}_{0,1}^{\mathbb Z})_\#\widehat\nu=\lambda
\), proving \eqref{eq:bilateral-path-transition-surjection}.
\end{proof}

\begin{theorem}[Transition-measure representation]\label{thm:transition-measure-form}
Let \(\Gamma\subset K\times K\) be compact, and suppose that \(\Omega_\Gamma\neq\varnothing\).  Then
\begin{equation}\label{eq:forward-path-transition-correspondence}
        (\operatorname{pr}_{0,1}^{+})_\#\mathcal M_\sigma(\Omega_\Gamma)
        =
        \mathcal I(\Gamma),
\end{equation}
and
\begin{equation}\label{eq:transition-L-divergence-criterion}
        \gamma(\Phi,\Omega_\Gamma)
        =
        \min_{\lambda\in\mathcal I(\Gamma)}
        \int_\Gamma T_\phi(q\mid p)\,d\lambda(p,q).
\end{equation}
\end{theorem}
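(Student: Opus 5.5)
The plan is to prove \eqref{eq:forward-path-transition-correspondence} by a two-way inclusion, reducing the nontrivial inclusion to Lemma~\ref{lem:bilateral-path-transition-correspondence}. The rate formula \eqref{eq:transition-L-divergence-criterion} will then follow from Theorem~\ref{thm:invariant-L-divergence-representation} by a change of variables.

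For the inclusion \(\subset\), let \(\nu\in\mathcal M_\sigma(\Omega_\Gamma)\) and set \(\lambda=(\operatorname{pr}_{0,1}^+)_\#\nu\). Since every \(\omega\in\Omega_\Gamma\) satisfies \((\omega_0,\omega_1)\in\Gamma\), the measure \(\lambda\) is a probability measure on \(\Gamma\). For \(h\in C(K)\), the function \(\omega\mapsto h(\omega_0)\) lies in \(C(\Omega_\Gamma)\) and satisfies \(h((\sigma\omega)_0)=h(\omega_1)\). The definition of \(\sigma\)-invariance therefore gives \(\int h(p)\,d\lambda=\int h(\omega_0)\,d\nu=\int h(\omega_1)\,d\nu=\int h(q)\,d\lambda\). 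By \eqref{eq:balanced-transition-test-functions}, \(\lambda\in\mathcal I(\Gamma)\).

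For the inclusion \(\supset\), start from \(\lambda\in\mathcal I(\Gamma)\). Lemma~\ref{lem:bilateral-path-transition-correspondence} gives \(\widehat\nu\in\mathcal M_{s_\Gamma}(X_\Gamma)\) with \((\operatorname{pr}_{0,1}^{\mathbb Z})_\#\widehat\nu=\lambda\). Let \(\rho:X_\Gamma\to K^{\mathbb N_0}\) be the restriction to nonnegative coordinates, \(\rho(\widehat\omega)=(\widehat\omega_k)_{k\ge0}\). This map is continuous, takes values in \(\Omega_\Gamma\) because every consecutive pair of a bilateral admissible path lies in \(\Gamma\), and satisfies \(\rho\circ s_\Gamma=\sigma\circ\rho\). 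Set \(\nu=\rho_\#\widehat\nu\). For \(h\in C(\Omega_\Gamma)\), \(\int h\circ\sigma\,d\nu=\int h\circ\rho\circ s_\Gamma\,d\widehat\nu=\int h\circ\rho\,d\widehat\nu=\int h\,d\nu\), so \(\nu\in\mathcal M_\sigma(\Omega_\Gamma)\). Moreover \(\operatorname{pr}_{0,1}^+\circ\rho=\operatorname{pr}_{0,1}^{\mathbb Z}\), so \((\operatorname{pr}_{0,1}^+)_\#\nu=\lambda\). The essential step, building a stationary process from a balanced coupling, is already carried out in the lemma through disintegration and Kolmogorov extension. What remains here is only the check that the one-sided restriction commutes with the shifts.

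Finally, \(f_\phi=T_\phi\circ\operatorname{pr}_{0,1}^+\) on \(\Omega_\Gamma\), where \(T_\phi(q\mid p)\) is regarded as a continuous function of \((p,q)\) on \(\Gamma\). Hence \(\int_{\Omega_\Gamma} f_\phi\,d\nu=\int_\Gamma T_\phi(q\mid p)\,d\lambda(p,q)\) whenever \(\lambda=(\operatorname{pr}_{0,1}^+)_\#\nu\). By \eqref{eq:forward-path-transition-correspondence}, the set of values \(\{\int f_\phi\,d\nu:\nu\in\mathcal M_\sigma(\Omega_\Gamma)\}\) coincides with \(\{\int T_\phi\,d\lambda:\lambda\in\mathcal I(\Gamma)\}\). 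Theorem~\ref{thm:invariant-L-divergence-representation} applies to the restricted path space \(\Omega_\Gamma\) and says the first set attains its minimum and that this minimum equals \(\gamma(\Phi,\Omega_\Gamma)\). The second set is the same set of values, so its minimum is attained and equals \(\gamma(\Phi,\Omega_\Gamma)\), which gives \eqref{eq:transition-L-divergence-criterion}.
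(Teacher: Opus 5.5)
Your proposal is correct and follows the paper's proof essentially step for step. You obtain the easy inclusion from shift invariance, the reverse inclusion by lifting through Lemma~\ref{lem:bilateral-path-transition-correspondence} and restricting to nonnegative coordinates, and the rate formula by the change of variables $f_\phi=T_\phi\circ\operatorname{pr}_{0,1}^+$ combined with Theorem~\ref{thm:invariant-L-divergence-representation}. Your explicit check that the restriction map intertwines the shifts, and your value-set argument for attainment, spell out details the paper states briefly; the paper gets attainment by noting instead that $\mathcal I(\Gamma)$ is a continuous image of the compact set $\mathcal M_\sigma(\Omega_\Gamma)$.
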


\begin{proof}
If \(\nu\in\mathcal M_\sigma(\Omega_\Gamma)\), then
\(\lambda=(\operatorname{pr}_{0,1}^{+})_\#\nu\) is supported on \(\Gamma\), and shift invariance gives equal first and second marginals.  Hence the left-hand side of \eqref{eq:forward-path-transition-correspondence} is contained in \(\mathcal I(\Gamma)\).

Conversely, for \(\lambda\in\mathcal I(\Gamma)\), Lemma~\ref{lem:bilateral-path-transition-correspondence} gives \(\widehat\nu\in\mathcal M_{s_\Gamma}(X_\Gamma)\) with prescribed \((0,1)\)-law \(\lambda\).  Projecting \(\widehat\nu\) to its nonnegative coordinates gives \(\nu\in\mathcal M_\sigma(\Omega_\Gamma)\) with
\((\operatorname{pr}_{0,1}^{+})_\#\nu=\lambda\).  This proves \eqref{eq:forward-path-transition-correspondence}.  It also shows that \(\mathcal I(\Gamma)\) is the continuous image of the nonempty compact set \(\mathcal M_\sigma(\Omega_\Gamma)\), and is therefore nonempty and compact.

Finally,
\[
\int_{\Omega_\Gamma}f_\phi\,d\nu
=
\int_\Gamma T_\phi(q\mid p)\,
 d((\operatorname{pr}_{0,1}^{+})_\#\nu)(p,q).
\]
Combining \eqref{eq:forward-path-transition-correspondence} with Theorem~\ref{thm:invariant-L-divergence-representation} gives \eqref{eq:transition-L-divergence-criterion}.
\end{proof}

\begin{remark}[Criterion for a zero global rate]\label{rem:zero-L-balanced-obstruction}
Since \(T_\phi\ge0\) and the minimum in \eqref{eq:transition-L-divergence-criterion} is attained, that formula gives
\[
        \gamma(\Phi,\Omega_\Gamma)=0
\]
if and only if there exists \(\lambda\in\mathcal I(\Gamma)\) such that
\[
        T_\phi(y\mid x)=0
        \qquad\text{for \(\lambda\)-almost every }(x,y)\in\Gamma.
\]
For example, an admissible self-loop \((p,p)\in\Gamma\) gives the balanced measure \(\delta_{(p,p)}\), whose \(L\)-divergence average is zero.
\end{remark}

\subsection{Recurrent localization and basic sets}\label{sec:chain-recurrent-localization}

We next locate the supports of balanced transition measures using chain recurrence. The closed-relation framework of Miller and Akin \cite[Sections~4--5]{MillerAkin1999} then allows us to select a global minimizer supported on one basic set.

\begin{definition}[Chain recurrence and basic sets]
For \(p\in K\), write \(\Gamma(p):=\{q\in K:(p,q)\in\Gamma\}\).  Given \(\varepsilon>0\), an \(\varepsilon\)-chain from \(p\) to \(q\) is a finite sequence \(p=\xi_0,\xi_1,\ldots,\xi_N=q\), with \(N\ge1\), such that for every \(0\le i<N\) there is \(\zeta_i\in\Gamma(\xi_i)\) satisfying
\[
d_K(\xi_{i+1},\zeta_i)<\varepsilon.
\]
The chain relation \(\mathcal C_\Gamma\) consists of the pairs \((p,q)\) for which such a chain exists for every \(\varepsilon>0\).  Set
\[
\mathcal R(\Gamma):=\{p\in K:(p,p)\in\mathcal C_\Gamma\},
\qquad
\mathcal C_\Gamma(p):=\{q\in K:(p,q)\in\mathcal C_\Gamma\}.
\]
These are the chain-recurrent set and the chain-forward set of \(p\), respectively.  On \(\mathcal R(\Gamma)\), define
\[
p\sim_\Gamma q
\quad\Longleftrightarrow\quad
(p,q)\in\mathcal C_\Gamma\ \text{and}\ (q,p)\in\mathcal C_\Gamma.
\]
The equivalence classes are the basic sets of \(\Gamma\); their family is denoted by \(\mathcal B(\Gamma)\).
\end{definition}

For \(Y\subseteq K\), write \(\Gamma_Y:=\Gamma\cap(Y\times Y)\).  We call \(\Gamma_Y\) surjective on \(Y\) if every point of \(Y\) has both a \(\Gamma_Y\)-predecessor and a \(\Gamma_Y\)-successor.

\begin{lemma}[Basic sets and invariant path measures]\label{lem:basic-set-path-measures}
The chain relation \(\mathcal C_\Gamma\) is closed.  Each \(B\in\mathcal B(\Gamma)\) is compact, and \(\Gamma_B\) is surjective on \(B\).  For every \(\widehat\nu\in\mathcal M_{s_\Gamma}(X_\Gamma)\), there are a Borel probability measure \(\Pi\) on \(\mathcal M_{s_\Gamma}(X_\Gamma)\) and a Borel set \(E\subseteq\mathcal M_{s_\Gamma}(X_\Gamma)\)
with \(\Pi(E)=1\) such that
\begin{equation}\label{eq:basic-set-average-decomposition}
\widehat\nu(A)=\int_E\eta(A)\,\Pi(d\eta)
\qquad\text{for every Borel }A\subseteq X_\Gamma,
\end{equation}
and each \(\eta\in E\) is supported on \(X_\Gamma\cap B_\eta^{\mathbb Z}\) for some \(B_\eta\in\mathcal B(\Gamma)\).
\end{lemma}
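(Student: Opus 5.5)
The plan has three parts. First, closedness of \(\mathcal C_\Gamma\) and the structure of basic sets. Second, ergodic decomposition of \(\widehat\nu\). Third, showing that each ergodic component lives on a single basic set.

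\emph{Closedness and compactness.} I would show closedness of \(\mathcal C_\Gamma\) directly. Let \((p_m,q_m)\to(p,q)\) with \((p_m,q_m)\in\mathcal C_\Gamma\), and fix \(\varepsilon>0\). Choose \(m\) with \(d_K(p_m,p)\) and \(d_K(q_m,q)\) small, and take an \(\varepsilon/2\)-chain from \(p_m\) to \(q_m\). Changing its final point to \(q\) costs at most \(\varepsilon/2\). Changing its initial point to \(p\) is the delicate step, because the first jump is measured from some \(\zeta_0\in\Gamma(p_m)\), not from \(p_m\) itself.

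To handle this, I use upper semicontinuity of \(\Gamma\), which holds since \(\Gamma\) is closed in compact \(K\times K\). Concretely, for \(p'\) close to \(p\), every point of \(\Gamma(p')\) lies within \(\varepsilon/2\) of \(\Gamma(p)\). Hence the first step, started from \(p\) instead of \(p_m\), is still an \(\varepsilon\)-step. This is exactly the Miller--Akin argument \cite[Sections~4--5]{MillerAkin1999}, and I would cite it.

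Compactness of each \(B\) then follows. On \(\mathcal R(\Gamma)\), \(B\) is the set \(\{q\in\mathcal R(\Gamma): (p,q),(q,p)\in\mathcal C_\Gamma\}\), which is an intersection of closed sets once \(\mathcal R(\Gamma)\) is closed. \(\mathcal R(\Gamma)\) is closed because it is the preimage of the closed set \(\mathcal C_\Gamma\) under the diagonal map.

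\emph{Surjectivity of \(\Gamma_B\).} Fix \(p\in B\), and take \(\varepsilon_m\)-chains from \(p\) to \(p\) with \(\varepsilon_m\to0\). The first step gives \(\zeta_0^m\in\Gamma(p)\). By compactness, pass to a limit \(\zeta\in\Gamma(p)\).

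Then \((p,\zeta)\in\mathcal C_\Gamma\), since the one-step chain \(p\to\zeta\) qualifies. Also \((\zeta,p)\in\mathcal C_\Gamma\): the tail of the \(m\)-th chain runs from \(\xi_1^m\) to \(p\), and \(\xi_1^m\) is close to \(\zeta\). If the chain has length one, then \(\zeta=p\) in the limit. Closedness of \(\mathcal C_\Gamma\) makes both conclusions pass to the limit. Therefore \(\zeta\in B\).

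Predecessors are handled symmetrically: take the last step of the chains, with limit \(\xi\) such that \(p\in\Gamma(\xi)\), again using compactness of \(\Gamma\).

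\emph{Ergodic decomposition.} \(X_\Gamma\) is a compact metrizable space and \(s_\Gamma\) is a homeomorphism onto its image. Indeed, \(s_\Gamma\) is a bijective continuous self-map of a compact space, hence a homeomorphism. The ergodic decomposition theorem \cite{Walters1982ErgodicTheory} then supplies \(\Pi\), concentrated on the Borel set \(E\) of ergodic measures, with \eqref{eq:basic-set-average-decomposition}.

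\emph{Support of ergodic components.} This is the main obstacle: showing that each ergodic \(\eta\) is carried by \(B^{\mathbb Z}\) for a single basic set \(B\). Let \(S=\supp\eta\), which is compact and \(s_\Gamma\)-invariant.

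By ergodicity and Poincaré recurrence, \(\eta\)-a.e.\ \(\widehat\omega\) has a dense forward orbit in \(S\). Fix such a generic \(\widehat\omega\), and let \(x,y\in\operatorname{pr}_0^{\mathbb Z}(S)\). The orbit returns arbitrarily near the cylinders above \(x\) and above \(y\) infinitely often.

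Each genuine orbit segment is an exact \(\Gamma\)-path. Splicing a segment with a small jump at each end produces \(\varepsilon\)-chains from \(x\) to \(y\) and from \(y\) to \(x\). Hence \(x\sim_\Gamma y\) and \(x\in\mathcal R(\Gamma)\). This shows that all points of \(\operatorname{pr}_0^{\mathbb Z}(S)\) lie in one basic set \(B_\eta\).

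Because \(S\) is shift-invariant, every coordinate of every \(\widehat\omega\in S\) lies in \(\operatorname{pr}_0^{\mathbb Z}(S)\subseteq B_\eta\). Thus \(\eta\) is supported on \(X_\Gamma\cap B_\eta^{\mathbb Z}\), which completes the argument. The care needed here is in the splicing: the \(\varepsilon\)-chain definition measures each jump from an actual successor \(\zeta_i\), and at the endpoints this again requires the semicontinuity estimate from the first step.
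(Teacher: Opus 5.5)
Your proposal is correct, but it takes a different route from the paper, most notably for the support of ergodic components. The paper cites Miller--Akin for closedness of $\mathcal C_\Gamma$, compactness of basic sets and surjectivity of $\Gamma_B$. For the measure statement it restricts $\Gamma$ to the dynamic domain $D_\Gamma=\operatorname{pr}_0^{\mathbb Z}(X_\Gamma)$, where the relation is surjective and has the same chain-recurrent set and basic sets. It then invokes their Theorem~4.3(c) to identify the basic sets of the bilateral shift $s_\Gamma$ with the sets $X_\Gamma\cap B^{\mathbb Z}$, and applies their Corollary~5.4(a): an ergodic measure is carried by one basic set of the shift. You instead prove the relation-level facts by hand. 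For the support, you take a generic point whose forward orbit is dense in $S=\supp\eta$, splice its exact orbit segments into $\varepsilon$-chains between any two points of $\operatorname{pr}_0^{\mathbb Z}(S)$, and use $s_\Gamma(S)=S$ to control every coordinate. The paper's argument is short given the citations and yields the full correspondence between basic sets of the shift and of $\Gamma$, which is more than the lemma needs. Yours is self-contained, needs neither the chain structure of the shift nor the passage to $D_\Gamma$, and shows directly why an ergodic component cannot straddle basic sets.

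Two small repairs are needed. First, in the surjectivity step the pairs $(\xi_1^m,p)$ are joined only by $\varepsilon_m$-chains and need not lie in $\mathcal C_\Gamma$. So what passes to the limit is your initial-point perturbation estimate (a limit of pairs joined by $\varepsilon/3$-chains is joined by an $\varepsilon$-chain), not closedness of $\mathcal C_\Gamma$ itself. Second, split the budget into thirds rather than halves, since for a chain of length one both endpoint perturbations fall on the same step. At the start of each spliced chain you can even skip semicontinuity: if $s_\Gamma^k\widehat\omega$ is close in the product topology to some $\widehat x\in S$ with $\widehat x_0=x$, then $\widehat\omega_{k+1}$ is close to $\widehat x_1\in\Gamma(x)$.
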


\begin{proof}
The closedness and basic-set properties follow from
\cite[Section~4 and Lemma~4.1]{MillerAkin1999}.
To compare the basic sets of \(\Gamma\) with those of the bilateral shift \(s_\Gamma\), first restrict \(\Gamma\) to its dynamic domain
\[
D_\Gamma:=\operatorname{pr}_0^{\mathbb Z}(X_\Gamma).
\]
This set is compact. Every point of \(D_\Gamma\) lies on a bilateral path, whose adjacent states give a predecessor and a successor in \(D_\Gamma\); hence \(\Gamma_{D_\Gamma}\) is surjective on \(D_\Gamma\). By \cite[Lemmas~1.2 and~4.1, and equation~(4.16)]{MillerAkin1999},
\[
\mathcal R(\Gamma)\subseteq D_\Gamma,
\qquad
\mathcal C_{\Gamma_{D_\Gamma}}
=\mathcal C_\Gamma\cap(D_\Gamma\times D_\Gamma).
\]
Thus \(\Gamma_{D_\Gamma}\) has the same chain-recurrent set and basic sets as \(\Gamma\), and its bilateral path space is also \(X_\Gamma\). Applying \cite[Theorem~4.3(c)]{MillerAkin1999} to \(\Gamma_{D_\Gamma}\) identifies the basic sets of the bilateral shift with \(X_\Gamma\cap B^{\mathbb Z}\), where \(B\in\mathcal B(\Gamma)\).

Now take an ergodic decomposition of \(\widehat\nu\), with decomposition
measure \(\Pi\) and a Borel set \(E\) of ergodic
\(s_\Gamma\)-invariant measures satisfying \(\Pi(E)=1\).
By \cite[Corollary~5.4(a)]{MillerAkin1999}, each \(\eta\in E\)
is supported on one basic set of the bilateral shift.
The preceding correspondence gives the stated support in
\(X_\Gamma\cap B_\eta^{\mathbb Z}\), and the ergodic decomposition
gives \eqref{eq:basic-set-average-decomposition}.
\end{proof}

\begin{proposition}[Recurrent localization of balanced transition measures]\label{prop:balanced-measures-chain-recurrent}
For every \(\lambda\in\mathcal I(\Gamma)\),
\begin{equation}\label{eq:balanced-measure-chain-recurrent-support}
        \lambda\bigl(\mathcal R(\Gamma)\times\mathcal R(\Gamma)\bigr)=1.
\end{equation}
\end{proposition}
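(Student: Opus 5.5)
The plan is to lift \(\lambda\) to a bilateral path measure and then use the basic-set decomposition. By Lemma~\ref{lem:bilateral-path-transition-correspondence}, there is \(\widehat\nu\in\mathcal M_{s_\Gamma}(X_\Gamma)\) with \((\operatorname{pr}_{0,1}^{\mathbb Z})_\#\widehat\nu=\lambda\). Hence
\[
\lambda\bigl(\mathcal R(\Gamma)\times\mathcal R(\Gamma)\bigr)
=\widehat\nu\bigl(\{\widehat\omega\in X_\Gamma:\widehat\omega_0\in\mathcal R(\Gamma),\ \widehat\omega_1\in\mathcal R(\Gamma)\}\bigr),
\]
and it suffices to show that this event has full \(\widehat\nu\)-measure. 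First I would record the measurability facts. Since \(\mathcal C_\Gamma\) is closed (Lemma~\ref{lem:basic-set-path-measures}), the set \(\mathcal R(\Gamma)\) is the preimage of the closed set \(\mathcal C_\Gamma\) under the continuous map \(p\mapsto(p,p)\). So \(\mathcal R(\Gamma)\) is compact, and the event above is closed in \(X_\Gamma\).

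Next I would apply the decomposition \eqref{eq:basic-set-average-decomposition} of Lemma~\ref{lem:basic-set-path-measures} to \(\widehat\nu\). This gives \(\Pi\) and \(E\) with \(\Pi(E)=1\), where each \(\eta\in E\) is supported on \(X_\Gamma\cap B_\eta^{\mathbb Z}\) for some basic set \(B_\eta\in\mathcal B(\Gamma)\). Basic sets are equivalence classes of \(\sim_\Gamma\) on \(\mathcal R(\Gamma)\), so \(B_\eta\subseteq\mathcal R(\Gamma)\). For each \(\eta\in E\), \(\eta\)-almost every \(\widehat\omega\) therefore has all coordinates in \(B_\eta\), and in particular \(\widehat\omega_0,\widehat\omega_1\in\mathcal R(\Gamma)\). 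Writing \(A\) for the event in the display, we get \(\eta(A)=1\) for all \(\eta\in E\). Then \eqref{eq:basic-set-average-decomposition} gives \(\widehat\nu(A)=\int_E\eta(A)\,\Pi(d\eta)=\Pi(E)=1\), which proves \eqref{eq:balanced-measure-chain-recurrent-support}.

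The step I expect to need the most care is the support claim for each \(\eta\). "Supported on \(X_\Gamma\cap B_\eta^{\mathbb Z}\)" has to mean \(\eta(X_\Gamma\cap B_\eta^{\mathbb Z})=1\). This is legitimate because \(B_\eta\) is compact by Lemma~\ref{lem:basic-set-path-measures}, so \(B_\eta^{\mathbb Z}\) is closed, and a measure supported on a closed set gives that set full mass. Beyond this, all the substantive dynamics (the ergodic decomposition and the Miller--Akin localization) is already packaged in Lemma~\ref{lem:basic-set-path-measures}, so the argument is a short bookkeeping step.
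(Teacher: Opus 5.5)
Your proposal is correct and follows essentially the same route as the paper: lift \(\lambda\) to \(\widehat\nu\in\mathcal M_{s_\Gamma}(X_\Gamma)\), apply the decomposition of Lemma~\ref{lem:basic-set-path-measures}, and use \(B_\eta\subseteq\mathcal R(\Gamma)\). The only cosmetic difference is that the paper first shows the common marginal satisfies \(m(\mathcal R(\Gamma))=1\) and then invokes equality of the two marginals, whereas you evaluate the two-coordinate event \(\{\widehat\omega_0,\widehat\omega_1\in\mathcal R(\Gamma)\}\) directly; both are fine.
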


\begin{proof}
Lift \(\lambda\) by Lemma~\ref{lem:bilateral-path-transition-correspondence} to \(\widehat\nu\in\mathcal M_{s_\Gamma}(X_\Gamma)\), and let \(m\) be the common marginal of \(\lambda\).  Apply \eqref{eq:basic-set-average-decomposition}.  For \(\Pi\)-almost every \(\eta\), the corresponding basic set \(B_\eta\) is contained in \(\mathcal R(\Gamma)\), so
\[
\eta\bigl((\operatorname{pr}_0^{\mathbb Z})^{-1}(\mathcal R(\Gamma))\bigr)=1.
\]
Since \(m=(\operatorname{pr}_0^{\mathbb Z})_\#\widehat\nu\), the decomposition gives
\[
m(\mathcal R(\Gamma))
=\int_E\eta\bigl((\operatorname{pr}_0^{\mathbb Z})^{-1}(\mathcal R(\Gamma))\bigr)\,\Pi(d\eta)
=1.
\]
Since both marginals of \(\lambda\) equal \(m\), the measure \(\lambda\) assigns full mass to \(\mathcal R(\Gamma)\times\mathcal R(\Gamma)\), which is \eqref{eq:balanced-measure-chain-recurrent-support}.
\end{proof}

\begin{corollary}[A global minimizer supported on one basic set]\label{cor:basic-set-global-minimizer}
There exist a basic set \(B\subset\mathcal R(\Gamma)\) and a measure
\(\lambda_B\in\mathcal I(\Gamma)\) such that \(\lambda_B\) attains the minimum in
\eqref{eq:transition-L-divergence-criterion} and
\begin{equation}\label{eq:basic-set-minimizer-support}
        \lambda_B(B\times B)=1.
\end{equation}
\end{corollary}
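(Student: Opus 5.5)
The plan is to take an arbitrary global minimizer and decompose it into pieces, each living on a single basic set, then pick the cheapest piece. By Theorem~\ref{thm:transition-measure-form}, the minimum in \eqref{eq:transition-L-divergence-criterion} is attained, so fix a minimizer \(\lambda^\ast\in\mathcal I(\Gamma)\) and write \(\gamma:=\gamma(\Phi,\Omega_\Gamma)=\int_\Gamma T_\phi\,d\lambda^\ast\).

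\textbf{Step 1: decompose the minimizer.} Lift \(\lambda^\ast\) by Lemma~\ref{lem:bilateral-path-transition-correspondence} to some \(\widehat\nu\in\mathcal M_{s_\Gamma}(X_\Gamma)\) with \((\operatorname{pr}_{0,1}^{\mathbb Z})_\#\widehat\nu=\lambda^\ast\). Then apply Lemma~\ref{lem:basic-set-path-measures}. This gives a probability measure \(\Pi\) and a set \(E\) with \(\Pi(E)=1\) such that \(\widehat\nu=\int_E\eta\,\Pi(d\eta)\), and each \(\eta\in E\) is supported on \(X_\Gamma\cap B_\eta^{\mathbb Z}\).

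\textbf{Step 2: push the pieces down to \(\Gamma\).} For each \(\eta\in E\) set \(\lambda_\eta:=(\operatorname{pr}_{0,1}^{\mathbb Z})_\#\eta\).
\begin{itemize}
\item By the first half of Lemma~\ref{lem:bilateral-path-transition-correspondence}, \(\lambda_\eta\in\mathcal I(\Gamma)\).
\item Since \(\eta\) is supported on \(B_\eta^{\mathbb Z}\), we have \(\lambda_\eta(B_\eta\times B_\eta)=1\).
\end{itemize}

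\textbf{Step 3: integrate the cost.} The function \(F:=T_\phi\circ\operatorname{pr}_{0,1}^{\mathbb Z}\) is bounded, continuous and nonnegative on \(X_\Gamma\). Since \eqref{eq:basic-set-average-decomposition} holds for Borel sets, it extends to bounded Borel functions by the usual simple-function approximation. The map \(\eta\mapsto\int F\,d\eta\) is Borel on \(\mathcal M_{s_\Gamma}(X_\Gamma)\), being weakly continuous, so
\[
\gamma=\int_{X_\Gamma}F\,d\widehat\nu=\int_E\Bigl(\int_\Gamma T_\phi\,d\lambda_\eta\Bigr)\Pi(d\eta).
\]

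\textbf{Step 4: select one piece.} Each inner integral is at least \(\gamma\), because \(\lambda_\eta\in\mathcal I(\Gamma)\) and \(\gamma\) is the minimum over \(\mathcal I(\Gamma)\). An average of quantities that are all \(\ge\gamma\) equals \(\gamma\), so \(\int_\Gamma T_\phi\,d\lambda_\eta=\gamma\) for \(\Pi\)-almost every \(\eta\in E\). In particular, since \(\Pi(E)=1>0\), at least one such \(\eta\) exists. Fix it and set \(B:=B_\eta\) and \(\lambda_B:=\lambda_\eta\). Then \(\lambda_B\) attains the minimum and satisfies \eqref{eq:basic-set-minimizer-support}. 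Finally, \(B\subset\mathcal R(\Gamma)\) by the definition of basic sets.

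\textbf{Expected obstacle.} The only delicate point is measure-theoretic. One must justify integrating \(F\) against the decomposition, including measurability of \(\eta\mapsto\int F\,d\eta\), and make sure the almost-everywhere selection is taken inside \(E\), so that the support conclusion of Lemma~\ref{lem:basic-set-path-measures} is available for the chosen \(\eta\). Weak continuity of \(\eta\mapsto\int F\,d\eta\) and \(\Pi(E)=1\) handle both issues.
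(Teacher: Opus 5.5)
Your proposal is correct and follows the paper's proof essentially step for step. Both lift a minimizer via Lemma~\ref{lem:bilateral-path-transition-correspondence}, decompose it with Lemma~\ref{lem:basic-set-path-measures}, push each component down to a balanced measure carried by \(B_\eta\times B_\eta\), and observe that an average of costs, each at least \(\gamma(\Phi,\Omega_\Gamma)\), can equal \(\gamma(\Phi,\Omega_\Gamma)\) only if equality holds \(\Pi\)-almost everywhere on \(E\). Your extra care about measurability of \(\eta\mapsto\int F\,d\eta\) and about choosing \(\eta\) inside \(E\) supplies details the paper leaves implicit.
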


\begin{proof}
Choose \(\lambda^\ast\in\mathcal I(\Gamma)\) attaining the minimum in
\eqref{eq:transition-L-divergence-criterion}, and lift it by
Lemma~\ref{lem:bilateral-path-transition-correspondence} to
\(\widehat\nu^\ast\in\mathcal M_{s_\Gamma}(X_\Gamma)\).  Apply \eqref{eq:basic-set-average-decomposition} to \(\widehat\nu^\ast\).  For \(\eta\in E\), let \(B_\eta\in\mathcal B(\Gamma)\) be the corresponding basic set and set
\[
\lambda_\eta:=(\operatorname{pr}_{0,1}^{\mathbb Z})_\#\eta.
\]
Then \(\lambda_\eta\in\mathcal I(\Gamma_{B_\eta})\).  Applying \eqref{eq:basic-set-average-decomposition} to \((\operatorname{pr}_{0,1}^{\mathbb Z})^{-1}(A)\), for Borel \(A\subseteq\Gamma\), gives
\[
\lambda^\ast=\int_E\lambda_\eta\,\Pi(d\eta).
\]
Hence
\[
\gamma(\Phi,\Omega_\Gamma)
=
\int_E\left(\int_{\Gamma_{B_\eta}}T_\phi(y\mid x)\,d\lambda_\eta(x,y)\right)\Pi(d\eta).
\]
Each inner integral is at least the global minimum \(\gamma(\Phi,\Omega_\Gamma)\).  Therefore equality holds for \(\Pi\)-almost every \(\eta\).  Choose one such \(\eta\) and write \(B=B_\eta\), \(\lambda_B=\lambda_\eta\).  Then \(\lambda_B\) attains the global minimum and \(\lambda_B(B\times B)=1\), proving \eqref{eq:basic-set-minimizer-support}.
\end{proof}

For every \(B\in\mathcal B(\Gamma)\),
Lemma~\ref{lem:basic-set-path-measures} shows that \(\Gamma_B\)
is surjective on \(B\). In particular, every point of \(B\)
has a \(\Gamma_B\)-successor, so an infinite admissible path
can be constructed within \(B\) and \(\Omega_{\Gamma_B}\neq\varnothing\).  Theorem~\ref{thm:transition-measure-form} therefore shows that \(\mathcal I(\Gamma_B)\) is nonempty and compact.  Define
\begin{equation}\label{eq:component-stationary-floor}
\beta_\phi(B)
:=
\min_{\lambda\in\mathcal I(\Gamma_B)}
\int_{\Gamma_B}T_\phi(y\mid x)\,d\lambda(x,y).
\end{equation}
The minimum is attained by continuity of \(T_\phi\).  Since \(\mathcal I(\Gamma_B)\subseteq\mathcal I(\Gamma)\), each \(\beta_\phi(B)\) is at least \(\gamma(\Phi,\Omega_\Gamma)\).  Corollary~\ref{cor:basic-set-global-minimizer} gives equality for at least one \(B\), and hence
\begin{equation}\label{eq:global-basic-set-minimum}
\gamma(\Phi,\Omega_\Gamma)
=\min_{B\in\mathcal B(\Gamma)}\beta_\phi(B).
\end{equation}

\section{Long-Run Growth from the Current Market State}\label{sec:state-conditioned-values}

We now fix the current market state as the time-zero state \(p\).  The relevant long-run transition laws are the empirical limits generated by arbitrarily long admissible paths starting from \(p\).  We represent the resulting lower asymptotic rate through these laws and then obtain lower and upper bounds using relation basic sets.

\subsection{Current-state values and empirical transition laws}

Recall that \(\Gamma\subset K\times K\) is compact and \(\Omega_\Gamma\ne\varnothing\). These assumptions remain in force throughout this section. The set of admissible initial states is
\begin{equation}\label{eq:viable-state-set}
        K_\Gamma^\infty
        :=
        \operatorname{pr}_0^+(\Omega_\Gamma).
\end{equation}
Thus \(K_\Gamma^\infty\) is the compact set of states from which an infinite \(\Gamma\)-admissible path starts.  Throughout this section, initial states are understood to lie in \(K_\Gamma^\infty\).  For such \(p\), the set of admissible paths starting from \(p\) is nonempty and is denoted by
\begin{equation}\label{eq:initial-state-path-fiber}
        \Omega_\Gamma(p)
        :=
        \{\omega\in\Omega_\Gamma:\omega_0=p\}.
\end{equation}

For \(t\ge1\), define the minimum accumulated \(L\)-divergence over the first \(t\) periods among admissible paths starting from \(p\) by
\begin{equation}\label{eq:state-conditioned-finite-horizon-value}
        \mathcal V_t^\phi(p)
        :=
        \min_{\omega\in\Omega_\Gamma(p)}
        \sum_{k=0}^{t-1}T_\phi(\omega_{k+1}\mid\omega_k),
\end{equation}
and set \(\mathcal V_0^\phi\equiv0\).  The minimum is attained because \(\Omega_\Gamma(p)\) is compact and the objective is continuous and depends on finitely many coordinates.  The minimum of the logarithm of relative wealth over \(\Omega_\Gamma(p)\) is also attained, and the wealth decomposition gives
\begin{equation}\label{eq:state-conditioned-endpoint-bound}
        \left|
        \min_{\omega\in\Omega_\Gamma(p)}\log V_\pi(t,\omega)
        -\mathcal V_t^\phi(p)
        \right|
        \le
        \max_{x\in K}\phi(x)-\min_{x\in K}\phi(x).
\end{equation}
Dividing this bound by \(t\) and taking lower limits gives
\[
        \liminf_{t\to\infty}
        \frac1t\min_{\omega\in\Omega_\Gamma(p)}\log V_\pi(t,\omega)
        =
        \liminf_{t\to\infty}\frac{\mathcal V_t^\phi(p)}{t}.
\]
We therefore work with \(\mathcal V_t^\phi(p)/t\) below.

\begin{definition}[Empirical transition laws]\label{def:current-state-empirical-transition-laws}
For \(p\in K_\Gamma^\infty\), \(\omega\in\Omega_\Gamma(p)\), and \(t\ge1\), define
\begin{equation}\label{eq:empirical-transition-measure-from-state}
L_t^\omega:=\frac1t\sum_{k=0}^{t-1}\delta_{(\omega_k,\omega_{k+1})}.
\end{equation}
The empirical transition laws generated from \(p\) form the set
\begin{equation}\label{eq:asymptotically-accessible-transition-measures}
\begin{aligned}
\mathfrak A_\Gamma(p):=\bigl\{\lambda\in\mathcal P(\Gamma):
&\text{there exist }t_j\to\infty\text{ and }\omega^{(j)}\in\Omega_\Gamma(p)\\
&\text{such that }L_{t_j}^{\omega^{(j)}}\Longrightarrow\lambda\bigr\}.
\end{aligned}
\end{equation}
\end{definition}

If
\[
        \mathcal E_t^\Gamma(p)
        :=
        \{L_t^\omega:\omega\in\Omega_\Gamma(p)\},
\]
then equivalently
\begin{equation}\label{eq:attainable-tail-closure}
        \mathfrak A_\Gamma(p)
        =
        \bigcap_{N\ge1}
        \overline{\bigcup_{t\ge N}\mathcal E_t^\Gamma(p)}
\end{equation}
in the weak topology of \(\mathcal P(\Gamma)\).

\begin{lemma}[Empirical transition limits are balanced]\label{lem:accessible-transition-measures-stationary}
For every \(p\in K_\Gamma^\infty\), the set \(\mathfrak A_\Gamma(p)\) is nonempty and compact, and
\begin{equation}\label{eq:accessible-measures-subset-stationary}
        \mathfrak A_\Gamma(p)\subset\mathcal I(\Gamma).
\end{equation}
\end{lemma}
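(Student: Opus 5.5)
The proof has three parts: nonemptiness, compactness, and balance. Each follows the compactness and telescoping argument from the proof of Lemma~\ref{lem:worst-case-birkhoff-averages}, now carried out on the edge space \(\Gamma\) rather than on \(\Omega\).

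For nonemptiness, fix \(p\in K_\Gamma^\infty\). By \eqref{eq:initial-state-path-fiber} there is some \(\omega\in\Omega_\Gamma(p)\). Every \(L_t^\omega\) is a Borel probability measure supported on the compact metrizable set \(\Gamma\), because consecutive pairs of an admissible path lie in \(\Gamma\). Hence \(\mathcal P(\Gamma)\) is weakly compact and metrizable. The sequence \((L_t^\omega)_{t\ge1}\) therefore has a weakly convergent subsequence \(L_{t_j}^\omega\Longrightarrow\lambda\). Taking \(\omega^{(j)}=\omega\) for every \(j\) shows \(\lambda\in\mathfrak A_\Gamma(p)\).

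For compactness, I would use the representation \eqref{eq:attainable-tail-closure}. It expresses \(\mathfrak A_\Gamma(p)\) as a decreasing intersection of closed subsets of the compact space \(\mathcal P(\Gamma)\), so it is closed and hence compact. The identity itself holds because \(\mathcal P(\Gamma)\) is metrizable. A limit of \(L_{t_j}^{\omega^{(j)}}\) with \(t_j\to\infty\) lies in every tail closure. Conversely, a point of the intersection can be approached, for each \(N\), by some \(L_{t}^{\omega}\) with \(t\ge N\) within distance \(1/N\). A diagonal choice then gives \(t_j\to\infty\). The same conclusion also follows directly from a diagonal argument on sequences in \(\mathfrak A_\Gamma(p)\).

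For balance, let \(\lambda\) be the weak limit of \(L_{t_j}^{\omega^{(j)}}\) and let \(h\in C(K)\). The sum telescopes:
\[
\int_\Gamma\bigl(h(q)-h(p)\bigr)\,dL_{t_j}^{\omega^{(j)}}(p,q)
=\frac{h(\omega^{(j)}_{t_j})-h(\omega^{(j)}_0)}{t_j}.
\]
The right-hand side has absolute value at most \(2\|h\|_\infty/t_j\to0\). The integrand \((p,q)\mapsto h(q)-h(p)\) is continuous on \(\Gamma\), so weak convergence passes the limit to \(\lambda\) and yields \eqref{eq:balanced-transition-test-functions}. Since \(\lambda\in\mathcal P(\Gamma)\), this gives \(\lambda\in\mathcal I(\Gamma)\), which is \eqref{eq:accessible-measures-subset-stationary}.

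The only slightly delicate step is the equivalence with \eqref{eq:attainable-tail-closure}, which requires metrizability for the diagonal extraction. Everything else is routine.
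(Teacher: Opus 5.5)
Your proof is correct and follows the paper's argument: it gets compactness from weak compactness of \(\mathcal P(\Gamma)\) and the tail-closure representation \eqref{eq:attainable-tail-closure}, and it gets the balance condition from the same telescoping identity. The differences are minor. You obtain nonemptiness from a convergent subsequence of \(L_t^\omega\) along one fixed path, whereas the paper intersects the nested nonempty compact sets \(F_N\). You also justify the identity \eqref{eq:attainable-tail-closure} via metrizability, which the paper only asserts.
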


\begin{proof}
For \(N\ge1\), set
\[
F_N:=\overline{\bigcup_{t\ge N}\mathcal E_t^\Gamma(p)}.
\]
Since \(p\in K_\Gamma^\infty\), the set \(\Omega_\Gamma(p)\) is nonempty; hence \(\mathcal E_t^\Gamma(p)\neq\varnothing\) for every \(t\), and therefore each \(F_N\) is nonempty.  Because \(\Gamma\) is compact, \(\mathcal P(\Gamma)\) is compact in the weak topology, so every \(F_N\) is compact.  Moreover,
\[
F_1\supseteq F_2\supseteq\cdots.
\]
Thus \((F_N)\) is a nested sequence of nonempty compact subsets of the compact space \(\mathcal P(\Gamma)\), so
\(
\bigcap_{N\ge1}F_N\neq\varnothing
\).
The intersection is closed in \(\mathcal P(\Gamma)\), hence compact.

Let \(\lambda\in\mathfrak A_\Gamma(p)\), and choose \(t_j\to\infty\) and \(\omega^{(j)}\in\Omega_\Gamma(p)\) such that \(L_{t_j}^{\omega^{(j)}}\Longrightarrow\lambda\).  For every \(h\in C(K)\),
\[
        \int_\Gamma\bigl(h(x)-h(y)\bigr)
        \,dL_{t_j}^{\omega^{(j)}}(x,y)
        =
        \frac{h(p)-h(\omega_{t_j}^{(j)})}{t_j}.
\]
The terminal difference divided by \(t_j\) tends to zero.  Passing to the weak limit gives
\[
        \int_\Gamma h(x)\,d\lambda(x,y)
        =
        \int_\Gamma h(y)\,d\lambda(x,y),
        \qquad h\in C(K),
\]
which is exactly the equal-marginal condition.
\end{proof}

Define the current-state lower asymptotic rate
\begin{equation}\label{eq:state-conditioned-liminf-value}
        \underline\alpha_\phi(p)
        :=
        \liminf_{t\to\infty}
        \frac{\mathcal V_t^\phi(p)}{t},
        \qquad p\in K_\Gamma^\infty.
\end{equation}

\begin{remark}[Why the current-state value retains a lower limit]\label{rem:fixed-initial-liminf}
For \(m,n\ge1\),
\[
\mathcal V_{m+n}^\phi(p)
=
\min_{\omega\in\Omega_\Gamma(p)}
\left\{
\sum_{k=0}^{m-1}T_\phi(\omega_{k+1}\mid\omega_k)
+
\mathcal V_n^\phi(\omega_m)
\right\}.
\]
The remaining minimization starts from \(\omega_m\), rather than from \(p\).
Thus the superadditivity argument for the global value does not apply
directly to \(\mathcal V_t^\phi(p)\), and the global Fekete argument
does not establish convergence of \(\mathcal V_t^\phi(p)/t\).

For every \(p\in K_\Gamma^\infty\), one always has
\[
\begin{aligned}
\underline\alpha_\phi(p)
&=\liminf_{t\to\infty}\min_{\omega\in\Omega_\Gamma(p)}
\frac1t\sum_{k=0}^{t-1}T_\phi(\omega_{k+1}\mid\omega_k)\\
&\le\inf_{\omega\in\Omega_\Gamma(p)}\liminf_{t\to\infty}
\frac1t\sum_{k=0}^{t-1}T_\phi(\omega_{k+1}\mid\omega_k).
\end{aligned}
\]
Indeed, the finite-horizon minimum is bounded above by the corresponding average along any fixed admissible path. Taking lower limits and then the infimum over paths gives the inequality. Interchanging optimization and the lower limit requires additional justification, as also noted in the control setting of \cite[equations~(0.2)--(0.3)]{BorkarGaitsgoryShvartsman2022Erratum}.
\end{remark}

\begin{theorem}[Current-state empirical-limit representation]\label{thm:state-conditioned-occupation-measure-representation}
For every \(p\in K_\Gamma^\infty\),
\begin{equation}\label{eq:state-conditioned-occupation-measure-formula}
        \underline\alpha_\phi(p)
        =
        \min_{\lambda\in\mathfrak A_\Gamma(p)}
        \int_\Gamma T_\phi(y\mid x)\,d\lambda(x,y).
\end{equation}
\end{theorem}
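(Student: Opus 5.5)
The identity will follow from two inequalities, both resting on one observation: the accumulated \(L\)-divergence is an integral against the empirical transition law. For every \(\omega\in\Omega_\Gamma(p)\) and \(t\ge1\),
\[
\frac1t\sum_{k=0}^{t-1}T_\phi(\omega_{k+1}\mid\omega_k)=\int_\Gamma T_\phi(y\mid x)\,dL_t^\omega(x,y).
\]
Moreover, \(T_\phi\) is continuous on the compact set \(\Gamma\subset K\times K\). Hence \(\lambda\mapsto\int_\Gamma T_\phi\,d\lambda\) is weakly continuous on \(\mathcal P(\Gamma)\). Lemma~\ref{lem:accessible-transition-measures-stationary} says \(\mathfrak A_\Gamma(p)\) is nonempty and compact, so the minimum on the right of \eqref{eq:state-conditioned-occupation-measure-formula} is attained. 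Call it \(m^\ast\).

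\emph{Step 1: \(m^\ast\le\underline\alpha_\phi(p)\).} I will realize the lower limit along a subsequence \(t_j\to\infty\), so that \(\mathcal V_{t_j}^\phi(p)/t_j\to\underline\alpha_\phi(p)\). This value is finite because \(0\le T_\phi\le\max_\Gamma T_\phi\). For each \(j\), pick a minimizer \(\omega^{(j)}\in\Omega_\Gamma(p)\) in \eqref{eq:state-conditioned-finite-horizon-value}; one exists, as noted after that definition. By weak compactness of \(\mathcal P(\Gamma)\), a further subsequence gives \(L_{t_j}^{\omega^{(j)}}\Longrightarrow\lambda\). This \(\lambda\) lies in \(\mathfrak A_\Gamma(p)\) by definition. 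Weak continuity of the integral then gives
\[
\int T_\phi\,d\lambda=\lim_j\frac{\mathcal V_{t_j}^\phi(p)}{t_j}=\underline\alpha_\phi(p),
\]
so \(m^\ast\le\underline\alpha_\phi(p)\). The same argument shows the minimum is attained at this \(\lambda\).

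\emph{Step 2: \(\underline\alpha_\phi(p)\le m^\ast\).} Take any \(\lambda\in\mathfrak A_\Gamma(p)\), with \(t_j\to\infty\) and \(\omega^{(j)}\in\Omega_\Gamma(p)\) such that \(L_{t_j}^{\omega^{(j)}}\Longrightarrow\lambda\). Since \(\omega^{(j)}\) is admissible from \(p\), the definition of \(\mathcal V_t^\phi(p)\) gives
\[
\frac{\mathcal V_{t_j}^\phi(p)}{t_j}\le\int T_\phi\,dL_{t_j}^{\omega^{(j)}}.
\]
A lower limit along all \(t\) is at most the lower limit along the subsequence \(t_j\), so
\[
\underline\alpha_\phi(p)\le\liminf_j\frac{\mathcal V_{t_j}^\phi(p)}{t_j}\le\lim_j\int T_\phi\,dL_{t_j}^{\omega^{(j)}}=\int T_\phi\,d\lambda.
\]
Minimizing over \(\lambda\in\mathfrak A_\Gamma(p)\) gives \(\underline\alpha_\phi(p)\le m^\ast\).

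There is no serious obstacle here. Letting the path \(\omega^{(j)}\) vary with \(j\) in the definition of \(\mathfrak A_\Gamma(p)\) is exactly what makes Step 1 work despite the issue raised in Remark~\ref{rem:fixed-initial-liminf}: we never need a single infinite path. The only points requiring care are that the lower limit must be approached along a chosen subsequence in Step 1, and that Step 2 uses only the comparison of a \(\liminf\) with a sub-sequential limit.
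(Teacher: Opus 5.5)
Your proposal is correct and follows essentially the same route as the paper. The paper bounds every empirical limit from below by comparing each empirical average with the finite-horizon minimum. It then takes minimizing paths along a subsequence realizing the lower limit and extracts a weakly convergent subsequence to obtain a minimizer in \(\mathfrak A_\Gamma(p)\); your two steps are the same two arguments, presented in the opposite order.
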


\begin{proof}
Let \(\lambda\in\mathfrak A_\Gamma(p)\), and choose \(t_j\to\infty\) and \(\omega^{(j)}\in\Omega_\Gamma(p)\) such that \(L_{t_j}^{\omega^{(j)}}\Longrightarrow\lambda\).  Since \(\mathcal V_{t_j}^\phi(p)\) is the minimum over paths in \(\Omega_\Gamma(p)\),
\[
        \int_\Gamma T_\phi(y\mid x)\,dL_{t_j}^{\omega^{(j)}}(x,y)
        \ge
        \frac{\mathcal V_{t_j}^\phi(p)}{t_j}.
\]
Taking limits and using continuity of \(T_\phi\) gives
\[
        \int_\Gamma T_\phi(y\mid x)\,d\lambda(x,y)
        \ge
        \underline\alpha_\phi(p).
\]

For the reverse inequality, choose \(t_j\to\infty\) such that
\[
        \frac{\mathcal V_{t_j}^\phi(p)}{t_j}
        \longrightarrow
        \underline\alpha_\phi(p),
\]
and choose a minimizing path \(\omega^{(j)}\in\Omega_\Gamma(p)\) for each \(t_j\).  After passing to a subsequence,
\(L_{t_j}^{\omega^{(j)}}\Longrightarrow\lambda_\ast\) for some \(\lambda_\ast\in\mathfrak A_\Gamma(p)\).  Therefore,
\[
        \int_\Gamma T_\phi(y\mid x)\,d\lambda_\ast(x,y)
        =
        \lim_{j\to\infty}
        \int_\Gamma T_\phi(y\mid x)\,dL_{t_j}^{\omega^{(j)}}(x,y)
        =
        \lim_{j\to\infty}
        \frac{\mathcal V_{t_j}^\phi(p)}{t_j}
        =
        \underline\alpha_\phi(p).
\]
This proves \eqref{eq:state-conditioned-occupation-measure-formula}.
\end{proof}

\subsection{Chain-forward support and bounds using basic sets}\label{sec:component-bridge}

Theorem~\ref{thm:state-conditioned-occupation-measure-representation} expresses \(\underline\alpha_\phi(p)\) in terms of \(\mathfrak A_\Gamma(p)\). We now use the supports of these laws to obtain a lower bound in terms of chain-accessible basic sets. Finite exact reachability provides a complementary upper bound. Both bounds can be strict, as shown in Appendix~\ref{app:component-pressure-tests}.

A basic set \(B\) is \emph{chain-accessible from \(p\)} when
\(B\cap\mathcal C_\Gamma(p)\neq\varnothing\); since \(B\) is one chain-equivalence class, this is equivalent to \(B\subseteq\mathcal C_\Gamma(p)\).  For finite exact reachability, write \(p\leadsto_\Gamma q\) if there exist \(r\ge0\) and an exact admissible path
\[
        \xi_0=p,\xi_1,\ldots,\xi_r=q,
        \qquad
        (\xi_i,\xi_{i+1})\in\Gamma\quad(0\le i<r).
\]
A basic set \(B\) is \emph{finitely exactly reachable from \(p\)}
if \(p\leadsto_\Gamma q\) for some \(q\in B\).

\begin{proposition}[Chain-forward support of current-state limits]\label{prop:fixed-initial-chain-forward}
Let \(p\in K_\Gamma^\infty\), let \(\lambda\in\mathfrak A_\Gamma(p)\), and set
\[
m:=(\mathrm{pr}_1)_\#\lambda=(\mathrm{pr}_2)_\#\lambda.
\]
Then
\begin{equation}\label{eq:fixed-initial-chain-forward-support}
\supp m
\subseteq
\mathcal R(\Gamma)\cap\mathcal C_\Gamma(p).
\end{equation}
In particular,
\[
\lambda\!\left(
\bigl(\mathcal R(\Gamma)\cap\mathcal C_\Gamma(p)\bigr)
\times
\bigl(\mathcal R(\Gamma)\cap\mathcal C_\Gamma(p)\bigr)
\right)=1.
\]
\end{proposition}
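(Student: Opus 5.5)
The plan is to prove the two inclusions \(\supp m\subseteq\mathcal R(\Gamma)\) and \(\supp m\subseteq\mathcal C_\Gamma(p)\) separately. The final mass statement then follows, because both marginals of \(\lambda\) equal \(m\), and because \(\mathcal R(\Gamma)\) and \(\mathcal C_\Gamma(p)\) are closed. Closedness of the chain relation \(\mathcal C_\Gamma\) is given by Lemma~\ref{lem:basic-set-path-measures}. Hence its diagonal section \(\mathcal R(\Gamma)\) and its slice \(\mathcal C_\Gamma(p)\) are closed, and so is their intersection. So it suffices to show that \(m(\mathcal R(\Gamma))=1\) and \(m(\mathcal C_\Gamma(p))=1\).

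For the recurrent part I would use Lemma~\ref{lem:accessible-transition-measures-stationary}, which gives \(\lambda\in\mathcal I(\Gamma)\). Proposition~\ref{prop:balanced-measures-chain-recurrent} then gives \(\lambda(\mathcal R(\Gamma)\times\mathcal R(\Gamma))=1\), hence \(m(\mathcal R(\Gamma))=1\). Since \(\mathcal R(\Gamma)\) is closed, this yields \(\supp m\subseteq\mathcal R(\Gamma)\).

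The chain-forward part is where the fixed initial state enters, and I expect it to be the main step. Let \(x\in\supp m\) and \(\varepsilon>0\). I need an \(\varepsilon\)-chain from \(p\) to \(x\).

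First, I argue that the approximating empirical laws visit a neighborhood of \(x\). Let \(t_j\to\infty\) and \(\omega^{(j)}\in\Omega_\Gamma(p)\) with \(L_{t_j}^{\omega^{(j)}}\Rightarrow\lambda\). The first marginals \(\frac1{t_j}\sum_{k<t_j}\delta_{\omega^{(j)}_k}\) then converge weakly to \(m\). By the portmanteau theorem applied to the open ball \(B(x,\varepsilon)\), which has positive \(m\)-mass because \(x\in\supp m\), for large \(j\) some index \(k\) satisfies \(d_K(\omega^{(j)}_k,x)<\varepsilon\).

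Next, I build the chain. If \(k\ge1\), take \(p=\omega^{(j)}_0,\omega^{(j)}_1,\dots,\omega^{(j)}_{k-1},x\). Every step except the last is exact. For the last step, the choice \(\zeta=\omega^{(j)}_k\in\Gamma(\omega^{(j)}_{k-1})\) gives \(d_K(x,\zeta)<\varepsilon\). If only \(k=0\) is available, then \(d_K(p,x)<\varepsilon\), and I use the one-step chain \(p,x\) with \(\zeta=\omega^{(j)}_1\)? That choice fails, because \(\omega^{(j)}_1\) need not be close to \(x\).

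To avoid this case, I would instead use that \(m\) gives positive mass to the ball, while the single index \(k=0\) carries empirical mass only \(1/t_j\to0\). Portmanteau gives \(\liminf_j\) of the empirical mass of \(B(x,\varepsilon)\) at least \(m(B(x,\varepsilon))>0\). Hence for large \(j\) some index \(k\ge1\) has \(\omega^{(j)}_k\in B(x,\varepsilon)\). This gives an \(\varepsilon\)-chain of length \(N=k\ge1\) from \(p\) to \(x\).

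Since \(\varepsilon\) is arbitrary, \((p,x)\in\mathcal C_\Gamma\). Therefore \(\supp m\subseteq\mathcal C_\Gamma(p)\), and combining this with the recurrent part completes the argument.
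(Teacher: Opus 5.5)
Your proposal is correct and follows the paper's proof. You get recurrence of \(\supp m\) from Lemma~\ref{lem:accessible-transition-measures-stationary} and Proposition~\ref{prop:balanced-measures-chain-recurrent}, then apply Portmanteau to the first marginals to build an \(\varepsilon\)-chain whose last step is approximated by an exact successor; the only difference is how you exclude the index \(k=0\). The paper treats \(x=p\) separately via \(x\in\mathcal R(\Gamma)\) and, for \(x\neq p\), shrinks the radius below \(d_K(p,x)/2\) so that \(\omega_0^{(j)}=p\) lies outside the ball, whereas your observation that index \(0\) carries empirical mass only \(1/t_j\) while the ball's mass stays bounded below handles every \(x\in\supp m\) uniformly (including \(x=p\), without invoking recurrence).
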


\begin{proof}
Proposition~\ref{prop:balanced-measures-chain-recurrent} gives \(\supp m\subseteq\mathcal R(\Gamma)\).  We now show that every \(x\in\supp m\) can be reached from \(p\) by an \(\varepsilon\)-chain for every \(\varepsilon>0\), which proves \(\supp m\subseteq\mathcal C_\Gamma(p)\).  Choose \(t_j\to\infty\) and \(\omega^{(j)}\in\Omega_\Gamma(p)\) such that
\(L_{t_j}^{\omega^{(j)}}\Longrightarrow\lambda\).  Their first marginals
\[
m_j:=\frac1{t_j}\sum_{k=0}^{t_j-1}\delta_{\omega_k^{(j)}}
\]
converge weakly to \(m\).

Fix \(x\in\supp m\).  If \(x=p\), then \(x\in\mathcal R(\Gamma)\) already gives \((p,p)\in\mathcal C_\Gamma\).  Suppose \(x\neq p\), and fix \(\varepsilon>0\).  Choose
\[
0<r<\min\{\varepsilon,d_K(p,x)/2\}.
\]
Write
\[
\mathbb B_r(x):=\{y\in K:d_K(x,y)<r\}.
\]
Since \(x\in\supp m\), the open ball \(\mathbb B_r(x)\) has positive \(m\)-measure.  The open-set form of the Portmanteau theorem gives
\[
0<m(\mathbb B_r(x))\le\liminf_{j\to\infty}m_j(\mathbb B_r(x)),
\]
so \(m_j(\mathbb B_r(x))>0\) for all sufficiently large \(j\).  Moreover \(p=\omega_0^{(j)}\notin\mathbb B_r(x)\) by the choice of \(r\).  Hence, for such a \(j\), there is an index \(1\le k<t_j\) with
\(\omega_k^{(j)}\in\mathbb B_r(x)\).  The finite sequence
\[
p=\omega_0^{(j)},\omega_1^{(j)},\ldots,\omega_{k-1}^{(j)},x
\]
is an \(\varepsilon\)-chain.  All preceding steps are exact \(\Gamma\)-edges, and for the final step we take the exact successor \(y=\omega_k^{(j)}\in\Gamma(\omega_{k-1}^{(j)})\); then
\[
d_K(x,y)=d_K(x,\omega_k^{(j)})<r<\varepsilon.
\]
Since \(\varepsilon\) was arbitrary, \(x\in\mathcal C_\Gamma(p)\). The final assertion follows because both marginals of \(\lambda\) assign full mass to \(\mathcal R(\Gamma)\cap\mathcal C_\Gamma(p)\).
\end{proof}

\begin{proposition}[Chain-accessible basic-set lower bound]\label{prop:component-floor-lower-bound}
For every \(p\in K_\Gamma^\infty\),
\begin{equation}\label{eq:component-floor-lower-bound}
\underline\alpha_\phi(p)
\ge
\min_{\substack{B\in\mathcal B(\Gamma)\\ B\cap\mathcal C_\Gamma(p)\neq\varnothing}}
\beta_\phi(B).
\end{equation}
\end{proposition}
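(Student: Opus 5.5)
By Theorem~\ref{thm:state-conditioned-occupation-measure-representation}, \(\underline\alpha_\phi(p)=\int_\Gamma T_\phi\,d\lambda_\ast\) for some \(\lambda_\ast\in\mathfrak A_\Gamma(p)\). It therefore suffices to show that every \(\lambda\in\mathfrak A_\Gamma(p)\) satisfies
\[
\int_\Gamma T_\phi(y\mid x)\,d\lambda(x,y)\ \ge\ \min_{B\in\mathcal B(\Gamma),\,B\cap\mathcal C_\Gamma(p)\ne\varnothing}\beta_\phi(B).
\]
To do this, I will decompose \(\lambda\) into balanced pieces, each supported on a single basic set, and then show that every piece charging a set of positive \(\Pi\)-measure lives on a chain-accessible basic set.

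By Lemma~\ref{lem:accessible-transition-measures-stationary}, \(\lambda\in\mathcal I(\Gamma)\). Lemma~\ref{lem:bilateral-path-transition-correspondence} then lifts \(\lambda\) to some \(\widehat\nu\in\mathcal M_{s_\Gamma}(X_\Gamma)\), and I apply the decomposition \eqref{eq:basic-set-average-decomposition} to \(\widehat\nu\). This gives \(\Pi\) and \(E\). Exactly as in the proof of Corollary~\ref{cor:basic-set-global-minimizer}, I obtain
\[
\lambda=\int_E\lambda_\eta\,\Pi(d\eta),
\qquad
\lambda_\eta:=(\operatorname{pr}_{0,1}^{\mathbb Z})_\#\eta\in\mathcal I(\Gamma_{B_\eta}).
\]
Consequently
\[
\int T_\phi\,d\lambda=\int_E\Bigl(\int T_\phi\,d\lambda_\eta\Bigr)\Pi(d\eta),
\]
and each inner integral is at least \(\beta_\phi(B_\eta)\). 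Measurability of \(\eta\mapsto\int T_\phi\,d\lambda_\eta\) follows because \(T_\phi\) is continuous, so the integrand is a weakly continuous function of \(\eta\).

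The key step is to show that \(B_\eta\cap\mathcal C_\Gamma(p)\ne\varnothing\) for \(\Pi\)-almost every \(\eta\). Let \(m\) be the common marginal of \(\lambda\). Proposition~\ref{prop:fixed-initial-chain-forward} gives \(m(\mathcal C_\Gamma(p))=1\); this set is closed, since \(\mathcal C_\Gamma\) is closed by Lemma~\ref{lem:basic-set-path-measures}. On the other hand,
\[
m=\int_E(\operatorname{pr}_0^{\mathbb Z})_\#\eta\,\Pi(d\eta),
\]
so \(\eta\bigl((\operatorname{pr}_0^{\mathbb Z})^{-1}\mathcal C_\Gamma(p)\bigr)=1\) for \(\Pi\)-a.e.\ \(\eta\). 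Each such \(\eta\) is supported on \(X_\Gamma\cap B_\eta^{\mathbb Z}\), so its state marginal is concentrated on \(B_\eta\). It follows that \(B_\eta\cap\mathcal C_\Gamma(p)\) has full measure under a probability measure and is therefore nonempty.

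Hence, for \(\Pi\)-a.e.\ \(\eta\), we have \(\beta_\phi(B_\eta)\ge\min_{B\cap\mathcal C_\Gamma(p)\ne\varnothing}\beta_\phi(B)\). Integrating against \(\Pi\), which is a probability measure, gives the claim. The main point needing care is that this minimum is actually attained and finite, because the family of chain-accessible basic sets may be infinite. The family is nonempty by the key step. If attainment is not already available from \eqref{eq:global-basic-set-minimum}-type arguments, I would read the minimum as an infimum, and the inequality is unchanged.
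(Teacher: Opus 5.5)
Your argument for the inequality is correct and follows the paper's route. You lift an empirical law, decompose it into pieces supported on single basic sets, use $m(\mathcal C_\Gamma(p))=1$ from Proposition~\ref{prop:fixed-initial-chain-forward} to place $\Pi$-almost every piece on a chain-accessible basic set, and average. What this establishes, however, is only $\underline\alpha_\phi(p)\ge\ell$, where $\ell$ is the \emph{infimum} of $\beta_\phi(B)$ over chain-accessible basic sets.

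The statement writes a minimum, so it also asserts that some chain-accessible $B$ satisfies $\beta_\phi(B)=\ell$. The paper proves this as a separate step, and your hedge of reading the minimum as an infimum weakens the claim rather than proving it. Attainment also does not follow from \eqref{eq:global-basic-set-minimum}. That formula minimizes over \emph{all} basic sets, and the global minimizer may lie on a basic set that is not chain-accessible from $p$. Since the chain-accessible family can be infinite, attainment over it has to be argued.

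The missing step is a compactness argument at the level of measures, after which your key step can be reused verbatim.
\begin{itemize}
\item Take chain-accessible $B_j$ with $\beta_\phi(B_j)\to\ell$, and choose $\lambda_j\in\mathcal I(\Gamma_{B_j})$ attaining $\beta_\phi(B_j)$.
\item Transitivity of $\mathcal C_\Gamma$ and mutual chain-accessibility within a basic set give $B_j\subseteq\mathcal C_\Gamma(p)$, hence $\lambda_j(\mathcal C_\Gamma(p)\times\mathcal C_\Gamma(p))=1$.
\item By compactness of $\mathcal I(\Gamma)$, a subsequence converges weakly to some $\lambda\in\mathcal I(\Gamma)$. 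Since $\mathcal C_\Gamma(p)$ is closed, the closed-set Portmanteau inequality gives $\lambda(\mathcal C_\Gamma(p)\times\mathcal C_\Gamma(p))=1$. Continuity of $T_\phi$ gives $\int T_\phi\,d\lambda=\ell$.
\item This $\lambda$ need not lie in $\mathfrak A_\Gamma(p)$, but your key step only used that the common marginal gives full mass to $\mathcal C_\Gamma(p)$. Lifting and decomposing $\lambda$ therefore shows that for $\Pi$-a.e.\ $\eta$, the set $B_\eta$ is chain-accessible and $\int T_\phi\,d\lambda_\eta\ge\beta_\phi(B_\eta)\ge\ell$.
\item These quantities average to $\ell$, so both inequalities are equalities $\Pi$-a.e. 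Any such $\eta$ gives a chain-accessible basic set with $\beta_\phi(B_\eta)=\ell$.
\end{itemize}
With this added, the minimum is attained and your proof is complete.
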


\begin{proof}
Fix \(p\in K_\Gamma^\infty\). Choose a minimizing \(\lambda^\ast\in\mathfrak A_\Gamma(p)\) in Theorem~\ref{thm:state-conditioned-occupation-measure-representation}, lift it to \(\widehat\nu^\ast\in\mathcal M_{s_\Gamma}(X_\Gamma)\), and apply \eqref{eq:basic-set-average-decomposition}. For \(\eta\in E\), let \(B_\eta\) be the corresponding basic set and set
\[
\lambda_\eta:=(\operatorname{pr}_{0,1}^{\mathbb Z})_\#\eta,
\qquad
m_\eta:=(\mathrm{pr}_1)_\#\lambda_\eta.
\]
Then \(\lambda_\eta\in\mathcal I(\Gamma_{B_\eta})\), and pushing the decomposition forward gives
\[
\lambda^\ast=\int_E\lambda_\eta\,\Pi(d\eta),
\qquad
m:=(\mathrm{pr}_1)_\#\lambda^\ast
=\int_E m_\eta\,\Pi(d\eta).
\]
By Proposition~\ref{prop:fixed-initial-chain-forward}, \(m(\mathcal C_\Gamma(p))=1\). Hence \(m_\eta(\mathcal C_\Gamma(p))=1\) for \(\Pi\)-almost every \(\eta\). Since \(m_\eta(B_\eta)=1\), it follows that
\(B_\eta\cap\mathcal C_\Gamma(p)\neq\varnothing\)
for \(\Pi\)-almost every \(\eta\).

Thus the family of basic sets in \eqref{eq:component-floor-lower-bound} is nonempty. Let \(\ell\) denote the infimum of \(\beta_\phi(B)\) over this family. For \(\Pi\)-almost every \(\eta\),
\[
\int_{\Gamma_{B_\eta}}T_\phi(y\mid x)\,d\lambda_\eta(x,y)
\ge\beta_\phi(B_\eta)\ge\ell.
\]
Averaging and using the minimizing property of \(\lambda^\ast\) yields \(\underline\alpha_\phi(p)\ge\ell\).

It remains to show that \(\ell\) is attained. Choose basic sets \(B_j\) from this family with \(\beta_\phi(B_j)\to\ell\), and choose \(\lambda_j\in\mathcal I(\Gamma_{B_j})\) attaining \(\beta_\phi(B_j)\). Each \(B_j\) is contained in \(\mathcal C_\Gamma(p)\): this follows from transitivity of \(\mathcal C_\Gamma\), since all points of a basic set are mutually chain-accessible. Regard \(\lambda_j\) as measures in \(\mathcal I(\Gamma)\). By compactness, after passing to a subsequence, \(\lambda_j\Longrightarrow\lambda\in\mathcal I(\Gamma)\). Since \(\mathcal C_\Gamma(p)\) is closed by Lemma~\ref{lem:basic-set-path-measures}, the Portmanteau theorem and continuity of \(T_\phi\) give
\[
\lambda\!\left(\Gamma\cap
\bigl(\mathcal C_\Gamma(p)\times\mathcal C_\Gamma(p)\bigr)\right)=1,
\qquad
\int_\Gamma T_\phi(y\mid x)\,d\lambda(x,y)=\ell.
\]

Apply the same lifting and decomposition argument to \(\lambda\). With a slight abuse of notation, write \(\Pi,E,B_\eta,\lambda_\eta\) again for the objects in this new decomposition. The full-mass condition above again gives \(B_\eta\cap\mathcal C_\Gamma(p)\neq\varnothing\) for \(\Pi\)-almost every \(\eta\). Thus the component costs again satisfy
\[
\int_{\Gamma_{B_\eta}}T_\phi(y\mid x)\,d\lambda_\eta(x,y)
\ge\beta_\phi(B_\eta)\ge\ell
\qquad\text{for \(\Pi\)-almost every }\eta.
\]
Their average equals \(\ell\), so both inequalities are equalities for \(\Pi\)-almost every \(\eta\), as in the proof of Corollary~\ref{cor:basic-set-global-minimizer}. Choosing one such component proves attainment and completes the proof.
\end{proof}

Recall that \(\Omega_{\Gamma_B}\) is the one-sided path space generated by \(\Gamma_B\). For \(q\in B\), write
\[
\Omega_{\Gamma_B}(q)
:=
\{\omega\in\Omega_{\Gamma_B}:\omega_0=q\},
\qquad q\in B.
\]
By Lemma~\ref{lem:basic-set-path-measures}, \(\Gamma_B\) is surjective on \(B\), so \(\Omega_{\Gamma_B}(q)\ne\varnothing\) for every \(q\in B\).  For \(t\ge1\), define the minimum accumulated \(L\)-divergence over the first \(t\) periods among admissible paths starting from \(q\) and remaining in \(B\) by
\begin{equation}\label{eq:basic-set-finite-horizon-value}
\mathcal V_{t,B}^\phi(q)
:=
\min_{\omega\in\Omega_{\Gamma_B}(q)}
\sum_{k=0}^{t-1}T_\phi(\omega_{k+1}\mid\omega_k).
\end{equation}
The minimum is attained because \(\Omega_{\Gamma_B}(q)\) is compact and the displayed cost depends continuously on the first \(t+1\) coordinates.  The restricted lower asymptotic rate is
\begin{equation}\label{eq:basic-set-pointwise-liminf}
\underline\alpha_{\phi,B}(q)
:=
\liminf_{t\to\infty}\frac{\mathcal V_{t,B}^\phi(q)}{t}.
\end{equation}

\begin{corollary}[Bounds for the current-state rate using basic sets]\label{cor:state-conditioned-basic-set-comparisons}
For every \(p\in K_\Gamma^\infty\),
\begin{equation}\label{eq:state-conditioned-basic-set-comparisons}
\min_{\substack{B\in\mathcal B(\Gamma)\\B\cap\mathcal C_\Gamma(p)\neq\varnothing}}
\beta_\phi(B)
\le
\underline\alpha_\phi(p)
\le
\inf_{\substack{B\in\mathcal B(\Gamma),\ q\in B\\p\leadsto_\Gamma q}}
\underline\alpha_{\phi,B}(q).
\end{equation}
The infimum over an empty right-hand family is understood as \(+\infty\).
\end{corollary}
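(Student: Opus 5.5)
The left inequality is exactly Proposition~\ref{prop:component-floor-lower-bound}, so only the right inequality needs an argument. If no pair \((B,q)\) with \(B\in\mathcal B(\Gamma)\), \(q\in B\), \(p\leadsto_\Gamma q\) exists, the right-hand side is \(+\infty\) and there is nothing to prove. Otherwise, fix such a pair and an exact admissible path \(\xi_0=p,\xi_1,\dots,\xi_r=q\) with \((\xi_i,\xi_{i+1})\in\Gamma\). It suffices to show \(\underline\alpha_\phi(p)\le\underline\alpha_{\phi,B}(q)\) for this pair and then take the infimum.

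The construction is a concatenation. For each \(t\ge1\), choose \(\omega^{B,t}\in\Omega_{\Gamma_B}(q)\) attaining \(\mathcal V_{t,B}^\phi(q)\); this path exists because \(\Gamma_B\) is surjective on \(B\) (Lemma~\ref{lem:basic-set-path-measures}). Define
\[
\omega:=(\xi_0,\dots,\xi_{r-1},\omega^{B,t}_0,\omega^{B,t}_1,\dots).
\]
Every consecutive pair lies in \(\Gamma\), since \(\Gamma_B\subseteq\Gamma\) and \(\omega^{B,t}_0=q=\xi_r\). Hence \(\omega\in\Omega_\Gamma(p)\). Its cost over the first \(r+t\) periods is
\[
\sum_{i=0}^{r-1}T_\phi(\xi_{i+1}\mid\xi_i)+\mathcal V_{t,B}^\phi(q)\le rM+\mathcal V_{t,B}^\phi(q),
\]
where \(M:=\max_{K\times K}T_\phi<\infty\) by continuity on a compact set. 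Therefore \(\mathcal V_{r+t}^\phi(p)\le rM+\mathcal V_{t,B}^\phi(q)\) for every \(t\ge1\).

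Now divide by \(r+t\) and take lower limits. The relevant fact is that a subsequence of indices shifted by the fixed \(r\) still realizes a liminf bound. Choose \(t_j\to\infty\) with \(\mathcal V_{t_j,B}^\phi(q)/t_j\to\underline\alpha_{\phi,B}(q)\). Then
\[
\underline\alpha_\phi(p)\le\liminf_j\frac{\mathcal V_{r+t_j}^\phi(p)}{r+t_j}\le\lim_j\frac{rM+\mathcal V_{t_j,B}^\phi(q)}{r+t_j}=\underline\alpha_{\phi,B}(q).
\]
The last equality holds because \(t_j/(r+t_j)\to1\) and \(\mathcal V_{t_j,B}^\phi(q)/t_j\) is bounded by \(M\).

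No step here is a real obstacle. The only points needing care are that the concatenated path is admissible for all future times, so that it belongs to \(\Omega_\Gamma(p)\) rather than being merely a finite chain, and that the liminf is taken along the shifted horizons \(r+t_j\). Both are handled above, the first by using an infinite minimizing path in \(\Omega_{\Gamma_B}(q)\).
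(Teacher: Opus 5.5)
Your proposal is correct and matches the paper's proof: the left inequality is quoted from Proposition~\ref{prop:component-floor-lower-bound}, and the right inequality comes from concatenating the exact prefix \(p=\xi_0,\ldots,\xi_r=q\) with a restricted \(t\)-step minimizer in \(B\). This gives \(\mathcal V_{t+r}^\phi(p)\le\sum_{i<r}T_\phi(\xi_{i+1}\mid\xi_i)+\mathcal V_{t,B}^\phi(q)\), and dividing by \(t+r\) and taking lower limits along the shifted horizons yields the bound. Your explicit treatment of the empty family, and of the subsequence \(r+t_j\), fills in details the paper leaves implicit.
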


\begin{proof}
The left-hand inequality is Proposition~\ref{prop:component-floor-lower-bound}.  For the right-hand inequality, fix \(B\in\mathcal B(\Gamma)\) and \(q\in B\) with \(p\leadsto_\Gamma q\), and choose an exact path \(p=\xi_0,\ldots,\xi_r=q\).  Concatenating this prefix with a restricted \(t\)-step minimizer in \(B\) gives
\[
\mathcal V_{t+r}^\phi(p)
\le
\sum_{i=0}^{r-1}T_\phi(\xi_{i+1}\mid \xi_i)
+
\mathcal V_{t,B}^\phi(q).
\]
Dividing by \(t+r\) and taking lower limits yields
\(
\underline\alpha_\phi(p)\le\underline\alpha_{\phi,B}(q)
\).
Taking the infimum over all pairs \((B,q)\) with \(p\leadsto_\Gamma q\) gives the right-hand inequality in \eqref{eq:state-conditioned-basic-set-comparisons}.
\end{proof}

\section{Long-Run Growth within a Basic Set}\label{sec:common-basic-set-rates}

The upper bound in \eqref{eq:state-conditioned-basic-set-comparisons} contains the restricted lower asymptotic rate \(\underline\alpha_{\phi,B}(q)\).  We ask when this quantity equals the balanced minimum \(\beta_\phi(B)\), so that the upper bound can be expressed using that minimum.  The conditions below give the stronger conclusion that \(\mathcal V_{t,B}^{\phi}(q)/t\) converges to \(\beta_\phi(B)\).  For every \(t\ge1\), the definitions give
\begin{equation}\label{eq:basic-set-path-minimum}
\begin{aligned}
\min_{x\in B}\mathcal V_{t,B}^{\phi}(x)
&=\min_{x\in B}\min_{\omega\in\Omega_{\Gamma_B}(x)}
\sum_{k=0}^{t-1}T_\phi(\omega_{k+1}\mid\omega_k)\\
&=\min_{\omega\in\Omega_{\Gamma_B}}
\sum_{k=0}^{t-1}T_\phi(\omega_{k+1}\mid\omega_k).
\end{aligned}
\end{equation}
Applying Lemma~\ref{lem:worst-case-birkhoff-averages} to the compact shift system \(\Omega_{\Gamma_B}\), and then Theorem~\ref{thm:transition-measure-form} to the restricted relation \(\Gamma_B\), gives
\begin{equation}\label{eq:basic-set-minimum-limit}
\begin{aligned}
\lim_{t\to\infty}
\min_{x\in B}\frac{\mathcal V_{t,B}^{\phi}(x)}{t}
&=
\min_{\nu\in\mathcal M_\sigma(\Omega_{\Gamma_B})}
\int_{\Omega_{\Gamma_B}} f_\phi\,d\nu \\
&=
\min_{\lambda\in\mathcal I(\Gamma_B)}
\int_{\Gamma_B}T_\phi(y\mid x)\,d\lambda(x,y)
=
\beta_\phi(B).
\end{aligned}
\end{equation}
Equation~\eqref{eq:basic-set-minimum-limit} identifies the asymptotic minimum when the initial state is also allowed to vary over \(B\).  We ask whether \(\mathcal V_{t,B}^{\phi}(q)/t\) converges to \(\beta_\phi(B)\) for a given \(q\in B\).

The next proposition records the finite-path concatenation argument used in the subsequent reachability corollary.  Recall that \(q\leadsto_{\Gamma_B}x\) means that there is a finite exact path in \(\Gamma_B\) from \(q\) to \(x\).

\begin{proposition}[Finite-prefix criterion for the basic-set rate]\label{prop:finite-reachability-basic-set-minimum}
Let \(B\in\mathcal B(\Gamma)\) and \(q\in B\).  Suppose that there exist \(x\in B\) and \(\omega\in\Omega_{\Gamma_B}(x)\) such that
\[
q\leadsto_{\Gamma_B}x
\]
and
\[
\lim_{t\to\infty}
\frac1t\sum_{k=0}^{t-1}
T_\phi(\omega_{k+1}\mid\omega_k)
=
\beta_\phi(B).
\]
Then
\begin{equation}\label{eq:full-basic-set-rate}
\underline\alpha_{\phi,B}(q)
=\lim_{t\to\infty}\frac{\mathcal V_{t,B}^{\phi}(q)}{t}
=\beta_\phi(B).
\end{equation}
\end{proposition}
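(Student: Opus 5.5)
The proof is a sandwich: a lower bound $\liminf_t \mathcal V_{t,B}^\phi(q)/t\ge\beta_\phi(B)$ that holds for every $q\in B$, and an upper bound $\limsup_t \mathcal V_{t,B}^\phi(q)/t\le\beta_\phi(B)$ obtained by concatenating the prefix with the given path $\omega$.

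\emph{Lower bound.} By \eqref{eq:basic-set-path-minimum}, $\mathcal V_{t,B}^\phi(q)\ge\min_{x'\in B}\mathcal V_{t,B}^\phi(x')$ for every $t\ge1$. Dividing by $t$ and applying \eqref{eq:basic-set-minimum-limit} gives
\[
\liminf_{t\to\infty}\frac{\mathcal V_{t,B}^\phi(q)}{t}\ge\beta_\phi(B).
\]
This uses nothing beyond the global limit already established on the compact shift system $\Omega_{\Gamma_B}$.

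\emph{Upper bound.} Fix an exact $\Gamma_B$-path $q=\xi_0,\ldots,\xi_r=x$, and set $C:=\sum_{i=0}^{r-1}T_\phi(\xi_{i+1}\mid\xi_i)$, a finite constant. Define $\tilde\omega:=(\xi_0,\ldots,\xi_{r-1},\omega_0,\omega_1,\ldots)$, using $\xi_r=x=\omega_0$. All its consecutive pairs lie in $\Gamma_B$, so $\tilde\omega\in\Omega_{\Gamma_B}(q)$. If $r=0$, then $q=x$ and $\tilde\omega=\omega$. For $t\ge r$ this gives
\[
\mathcal V_{t,B}^\phi(q)\le C+\sum_{k=0}^{t-r-1}T_\phi(\omega_{k+1}\mid\omega_k).
\]
Divide by $t$ and write the sum as $\frac{t-r}{t}\cdot\frac{1}{t-r}\sum_{k=0}^{t-r-1}(\cdots)$. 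By the hypothesis on $\omega$, the inner average tends to $\beta_\phi(B)$, while $C/t\to0$ and $(t-r)/t\to1$. Hence
\[
\limsup_{t\to\infty}\frac{\mathcal V_{t,B}^\phi(q)}{t}\le\beta_\phi(B).
\]

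\emph{Conclusion.} Combining the two bounds, the limit of $\mathcal V_{t,B}^\phi(q)/t$ exists and equals $\beta_\phi(B)$. In particular it coincides with the lower limit $\underline\alpha_{\phi,B}(q)$, which is \eqref{eq:full-basic-set-rate}.

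The only point needing care is that the concatenated path stays in $\Omega_{\Gamma_B}$, so that it is admissible for the restricted minimum rather than merely for $\Gamma$. This holds because both the prefix and $\omega$ use only $\Gamma_B$-edges, and $\xi_r=\omega_0$ ensures there is no gap at the junction. Everything else reduces to bounded-prefix bookkeeping together with \eqref{eq:basic-set-minimum-limit}.
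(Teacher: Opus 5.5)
Your proposal is correct and matches the paper's proof: the upper bound comes from concatenating the finite $\Gamma_B$-prefix with $\omega$, and the lower bound from $\min_{y\in B}\mathcal V_{t,B}^\phi(y)\le\mathcal V_{t,B}^\phi(q)$ together with \eqref{eq:basic-set-minimum-limit}. Your explicit construction of the infinite path $\tilde\omega\in\Omega_{\Gamma_B}(q)$ is slightly more careful than the paper's phrase ``the resulting $t$-step path is admissible'', but the argument is the same.
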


\begin{proof}
Choose a finite exact \(\Gamma_B\)-admissible path
\[
q=\xi_0,\xi_1,\ldots,\xi_r=x.
\]
For \(t>r\), concatenate this prefix with the first \(t-r\) transitions of \(\omega\).  The resulting \(t\)-step path is admissible and starts from \(q\), so
\[
\mathcal V_{t,B}^{\phi}(q)
\le
\sum_{i=0}^{r-1}T_\phi(\xi_{i+1}\mid \xi_i)
+
\sum_{k=0}^{t-r-1}T_\phi(\omega_{k+1}\mid\omega_k).
\]
The prefix cost is independent of \(t\).  After division by \(t\), the second sum is \((t-r)/t\) times its \((t-r)\)-step average.  Since \((t-r)/t\to1\), the assumed long-run average gives
\[
\limsup_{t\to\infty}
\frac{\mathcal V_{t,B}^{\phi}(q)}{t}
\le
\beta_\phi(B).
\]
On the other hand,
\[
\min_{y\in B}\mathcal V_{t,B}^{\phi}(y)
\le
\mathcal V_{t,B}^{\phi}(q)
\]
for every \(t\).  Equation~\eqref{eq:basic-set-minimum-limit} therefore yields
\[
\beta_\phi(B)
\le
\liminf_{t\to\infty}
\frac{\mathcal V_{t,B}^{\phi}(q)}{t}.
\]
The two bounds prove \eqref{eq:full-basic-set-rate}.
\end{proof}

\begin{remark}[Finite-horizon and single-path values]\label{rem:finite-horizon-single-path-values}
Under the hypotheses of Proposition~\ref{prop:finite-reachability-basic-set-minimum}, the finite-horizon and single-path values agree for paths starting from \(q\) and remaining in \(B\):
\[
\begin{aligned}
&\lim_{t\to\infty}
\min_{\eta\in\Omega_{\Gamma_B}(q)}
\frac1t\sum_{k=0}^{t-1}T_\phi(\eta_{k+1}\mid\eta_k)\\
&\quad=
\min_{\eta\in\Omega_{\Gamma_B}(q)}
\liminf_{t\to\infty}\frac1t\sum_{k=0}^{t-1}T_\phi(\eta_{k+1}\mid\eta_k)\\
&\quad=\beta_\phi(B).
\end{aligned}
\]
By Proposition~\ref{prop:finite-reachability-basic-set-minimum}, the first term equals \(\beta_\phi(B)\). For every \(\eta\in\Omega_{\Gamma_B}(q)\) and \(t\ge1\),
\[
\frac{\mathcal V_{t,B}^{\phi}(q)}{t}
\le
\frac1t\sum_{k=0}^{t-1}T_\phi(\eta_{k+1}\mid\eta_k).
\]
Taking lower limits therefore shows that every single-path value is at least \(\beta_\phi(B)\). The concatenated path used in the proof of that proposition has an average converging to \(\beta_\phi(B)\), so it attains this lower bound. This proves the displayed identity.
\end{remark}

Pairwise finite exact reachability on \(B\) yields this convergence for every initial state in \(B\).

\begin{corollary}[Pairwise finite exact reachability on a basic set]\label{cor:basic-set-pairwise-reachability}
Let \(B\in\mathcal B(\Gamma)\). Suppose that
\[
u\leadsto_{\Gamma_B}v
\qquad\text{for every }u,v\in B.
\]
Then
\[
\underline\alpha_{\phi,B}(q)
=\lim_{t\to\infty}\frac{\mathcal V_{t,B}^{\phi}(q)}{t}
=\beta_\phi(B),
\qquad q\in B.
\]
\end{corollary}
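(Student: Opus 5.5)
The plan is to reduce the corollary to Proposition~\ref{prop:finite-reachability-basic-set-minimum}. For this I need one state \(x\in B\) and one path \(\omega\in\Omega_{\Gamma_B}(x)\) whose Birkhoff average of the \(L\)-divergence converges to \(\beta_\phi(B)\). Pairwise reachability then supplies \(q\leadsto_{\Gamma_B}x\) for every \(q\in B\), and the proposition gives the conclusion at each \(q\).

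To build \(\omega\), I start from a minimizer \(\lambda_B\in\mathcal I(\Gamma_B)\) attaining \(\beta_\phi(B)\) in \eqref{eq:component-stationary-floor}. By Theorem~\ref{thm:transition-measure-form} applied to \(\Gamma_B\), it lifts to some \(\nu\in\mathcal M_\sigma(\Omega_{\Gamma_B})\) with \(\int f_\phi\,d\nu=\beta_\phi(B)\). By \eqref{eq:basic-set-minimum-limit}, \(\beta_\phi(B)\) is the minimum of \(\int f_\phi\,d\nu'\) over all \(\nu'\in\mathcal M_\sigma(\Omega_{\Gamma_B})\).

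Next I take an ergodic decomposition of \(\nu\). Each ergodic component \(\eta\) has \(\int f_\phi\,d\eta\ge\beta_\phi(B)\), and these integrals average to \(\beta_\phi(B)\). Hence \(\Pi\)-almost every component is an ergodic minimizer; this is the same averaging argument used in Corollary~\ref{cor:basic-set-global-minimizer}. Fix one such ergodic \(\eta\). Since \(f_\phi\in C(\Omega_{\Gamma_B})\), Birkhoff's ergodic theorem gives, for \(\eta\)-almost every \(\omega\),
\[
\frac1t\sum_{k=0}^{t-1}T_\phi(\omega_{k+1}\mid\omega_k)=\frac1t\sum_{k=0}^{t-1}f_\phi(\sigma^k\omega)\longrightarrow\int f_\phi\,d\eta=\beta_\phi(B).
\]
Pick one such \(\omega\) and set \(x:=\omega_0\in B\). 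Then \(\omega\in\Omega_{\Gamma_B}(x)\), since paths in \(\Omega_{\Gamma_B}\) stay in \(B\).

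Finally, for an arbitrary \(q\in B\), the hypothesis gives \(q\leadsto_{\Gamma_B}x\). Proposition~\ref{prop:finite-reachability-basic-set-minimum} then yields \(\underline\alpha_{\phi,B}(q)=\lim_t\mathcal V^\phi_{t,B}(q)/t=\beta_\phi(B)\).

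The only delicate step is producing a single path whose average actually converges, not just has the right lower limit. Invariant-measure minimization alone does not provide this, so ergodic decomposition together with Birkhoff's theorem is essential. Measurability of the decomposition is standard on the compact metrizable system \(\Omega_{\Gamma_B}\).
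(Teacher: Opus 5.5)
Your proposal is correct and follows the paper's proof: obtain an ergodic minimizing measure on \(\Omega_{\Gamma_B}\), take a Birkhoff-generic path \(\omega\) with \(x=\omega_0\), and apply Proposition~\ref{prop:finite-reachability-basic-set-minimum} using \(q\leadsto_{\Gamma_B}x\). The one minor difference is how the ergodic minimizer is found. The paper takes an extreme point of the minimizing face via Krein--Milman and uses the fact that extreme invariant measures are ergodic, whereas you take an ergodic decomposition of a minimizer and average, as in Corollary~\ref{cor:basic-set-global-minimizer}; both are standard and equally valid.
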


\begin{proof}
By Theorem~\ref{thm:invariant-L-divergence-representation} applied to \(\Omega_{\Gamma_B}\) and Theorem~\ref{thm:transition-measure-form} applied to \(\Gamma_B\),
\[
\min_{\nu\in\mathcal M_\sigma(\Omega_{\Gamma_B})}
\int_{\Omega_{\Gamma_B}}f_\phi\,d\nu
=\beta_\phi(B).
\]
The functional \(\nu\mapsto\int f_\phi\,d\nu\) is continuous and affine on the compact convex set \(\mathcal M_\sigma(\Omega_{\Gamma_B})\). Its minimizing set is a nonempty compact face. By the Krein--Milman theorem, this face contains an extreme point, which is also extreme in \(\mathcal M_\sigma(\Omega_{\Gamma_B})\). By \cite[Theorem~6.10(iii)]{Walters1982ErgodicTheory}, the extreme invariant measures are precisely the ergodic ones. Choose an ergodic minimizing measure \(\nu\). Birkhoff's ergodic theorem \cite[Theorem~1.14 and the following remark]{Walters1982ErgodicTheory} gives, for \(\nu\)-almost every \(\omega\in\Omega_{\Gamma_B}\),
\[
\frac1t\sum_{k=0}^{t-1}T_\phi(\omega_{k+1}\mid\omega_k)
=\frac1t\sum_{k=0}^{t-1}f_\phi(\sigma^k\omega)
\longrightarrow
\int_{\Omega_{\Gamma_B}}f_\phi\,d\nu
=\beta_\phi(B).
\]
Choose one such path and set \(x=\omega_0\). Then \(\omega\in\Omega_{\Gamma_B}(x)\), and the assumption gives \(q\leadsto_{\Gamma_B}x\) for every \(q\in B\). Proposition~\ref{prop:finite-reachability-basic-set-minimum} applies to each \(q\).
\end{proof}

\section{Discussion and Outlook}\label{sec:discussion}

This paper evaluates worst-case relative growth for a fixed functionally
generated portfolio under prescribed market-path constraints. It distinguishes
the global rate \(\gamma(\Phi,\Omega)\) from the lower asymptotic rate
\(\underline\alpha_\phi(p)\) associated with a specified current state in
the transition-relation setting. The former admits an invariant-measure
characterization; the latter depends on the empirical transition laws
generated from that state. Chain reachability and finite exact reachability
give complementary basic-set bounds, both of which can be strict
(Appendix~\ref{app:component-pressure-tests}). Within a basic set,
Section~\ref{sec:common-basic-set-rates} gives sufficient conditions for
\(\mathcal V_{t,B}^{\phi}(q)/t\) to converge to \(\beta_\phi(B)\).

One direction for future work is to equip the transition relation with
transition probabilities and study expected or almost-sure relative growth.
Such a model could describe both changes in market weights and periods
during which they remain unchanged.

A second direction is to study path constraints that depend on past market
weights. The general path space \(\Omega\) already allows such constraints,
but their implications for empirical transition laws require further study.
The signature methods used in the portfolio constructions of
\cite{CuchieroMoeller2025Signature} offer one way to describe past market
movements. Future work could investigate whether these methods can help
formulate path constraints and analyze the resulting worst-case relative
growth rate.

Finally, one may optimize \(\gamma(\Phi,\Omega)\) over a prescribed class
of generating functions while keeping \(\Omega\) fixed. The measure
representations established here provide a starting point for studying
the existence of an optimal generator and developing computational methods
for this problem.

\appendix
\section{Strictness of the Lower and Upper Bounds}\label{app:component-pressure-tests}

The two examples below show that each inequality in \eqref{eq:state-conditioned-basic-set-comparisons} can be strict.  The first has a unique chain-accessible basic set and a zero global rate, but a strictly positive current-state rate.  The second has a zero current-state rate while the infimum in the upper bound is strictly positive.

Work on the compact interval \(K=\{(x,1-x):x\in[a,b]\}\subset\Delta_2\), where \(0<a<b<1\).  Fix a positive \(C^1\), strictly concave generator \(\Phi\) on a neighborhood of \(K\), and set \(\phi=\log\Phi\).  Since
\[
T_\phi(q\mid p)
=
\log\frac{\Phi(p)+D\Phi(p)[q-p]}{\Phi(q)},
\]
strict concavity gives
\begin{equation}\label{eq:strict-L-positive-off-diagonal}
T_\phi(q\mid p)>0
\qquad(p,q\in K,\ p\neq q),
\end{equation}
while \(T_\phi(p\mid p)=0\).

\begin{example}[A chain-accessible basic set with a strict current-state gap]\label{ex:strict-component-lower}
We construct a relation with a unique basic set \(B\), chain-accessible from \(z_0\), for which
\[
\mathfrak A_\Gamma(z_0)\subsetneq\mathcal I(\Gamma_B),
\qquad
\gamma(\Phi,\Omega_\Gamma)=0<\underline\alpha_\phi(z_0).
\]

Choose distinct states \(a_0,a_1\in K\) and a sequence of pairwise distinct states \((z_n)_{n\ge0}\subset K\setminus\{a_0,a_1\}\) such that
\[
z_{2n}\to a_0,
\qquad
z_{2n+1}\to a_1.
\]
Let \(B:=\{a_0,a_1\}\) and
\[
\Gamma:=B\times B
\;\cup\;
\{(z_n,z_{n+1}):n\ge0\}.
\]
The only limit points of the sequence \((z_n,z_{n+1})_{n\ge0}\) are \((a_0,a_1)\) and \((a_1,a_0)\), both of which belong to \(B\times B\). Hence \(\Gamma\) is compact.

We claim that \(\mathcal R(\Gamma)=B\). Every point of \(B\) is chain recurrent, and points outside \(B\cup\{z_n:n\ge0\}\) have no successors. It remains to show that no \(z_n\) is chain recurrent. Fix \(n\ge0\), and set
\[
E_n:=\{z_0,\ldots,z_n\},
\qquad
F_n:=B\cup\{z_m:m\ge n+1\}.
\]
These are disjoint compact sets. Choose \(0<\varepsilon<d_K(E_n,F_n)\). The first exact successor of \(z_n\) is \(z_{n+1}\in F_n\). Every nonterminal point of an \(\varepsilon\)-chain has a successor and hence belongs to \(E_n\cup F_n\). Since all exact successors of points of \(F_n\) remain in \(F_n\), the separation of the two sets forces every nonterminal point after \(z_n\) to lie in \(F_n\) and prevents the final point from lying in \(E_n\). Thus no \(\varepsilon\)-chain from \(z_n\) can return to \(z_n\), proving the claim. Since \(B\times B\subset\Gamma\), the set \(B\) is the unique relation basic set.

From \(z_0\) the admissible path is unique:
\[
\omega=(z_0,z_1,z_2,\ldots).
\]
The even and odd transition subsequences converge respectively to \((a_0,a_1)\) and \((a_1,a_0)\), each with asymptotic density \(1/2\).  Hence, with
\[
\lambda_{\mathrm{alt}}
:=
\frac12\delta_{(a_0,a_1)}
+
\frac12\delta_{(a_1,a_0)},
\]
one has
\[
L_t^\omega\Longrightarrow\lambda_{\mathrm{alt}},
\qquad
\mathfrak A_\Gamma(z_0)=\{\lambda_{\mathrm{alt}}\}.
\]
By Lemma~\ref{lem:accessible-transition-measures-stationary}, \(\lambda_{\mathrm{alt}}\) is balanced. Its explicit form shows that it is supported on \(B\times B\), so \(\lambda_{\mathrm{alt}}\in\mathcal I(\Gamma_B)\).  Theorem~\ref{thm:state-conditioned-occupation-measure-representation} gives
\[
\underline\alpha_\phi(z_0)
=
\int_{\Gamma_B}T_\phi(y\mid x)\,d\lambda_{\mathrm{alt}}(x,y)
=
\frac12T_\phi(a_1\mid a_0)
+
\frac12T_\phi(a_0\mid a_1)
>0.
\]
On the other hand,
\[
\delta_{(a_0,a_0)}\in
\mathcal I(\Gamma_B)\setminus\mathfrak A_\Gamma(z_0).
\]
More explicitly,
\[
\mathfrak A_\Gamma(z_0)
=
\{\lambda_{\mathrm{alt}}\}
\subsetneq
\mathcal I(\Gamma_B).
\]
In particular, \(\beta_\phi(B)=0\).  The basic set \(B\) is chain-accessible from \(z_0\): follow a sufficiently long prefix of the path \((z_n)_{n\ge0}\) and use the convergence \(z_{2N+1}\to a_1\) for the final \(\varepsilon\)-step.  It is not finitely exactly reachable from \(z_0\), because the unique exact path from \(z_0\) remains among the states \(z_n\).

Since \(B\) is the unique basic set, \eqref{eq:global-basic-set-minimum} gives
\[
\gamma(\Phi,\Omega_\Gamma)
=
\beta_\phi(B)
=
0
<
\underline\alpha_\phi(z_0).
\]
Thus the lower bound in \eqref{eq:state-conditioned-basic-set-comparisons} is strict: minimizing over \(\mathcal I(\Gamma_B)\) allows zero-cost self-loops, whereas the only empirical transition law generated from \(z_0\) is \(\lambda_{\mathrm{alt}}\).

For a concrete equal-weight example, take
\[
K=\{(x,1-x):1/4\le x\le3/4\},\qquad
\Phi(p)=\sqrt{p_1p_2},
\]
and choose \(a_0=(1/3,2/3)\) and \(a_1=(2/3,1/3)\).
The generated portfolio is \(\pi=(1/2,1/2)\). Since the one-step
relative wealth factor in either direction is \(5/4\) and
\(\Phi(a_0)=\Phi(a_1)\),
\[
T_\phi(a_1\mid a_0)=T_\phi(a_0\mid a_1)=\log(5/4).
\]
Thus \(\gamma(\Phi,\Omega_\Gamma)=0<\underline\alpha_\phi(z_0)=\log(5/4)\). Even with a unique basic set, the global rate need not equal the lower asymptotic rate from a specified initial state.
\end{example}

\begin{example}[Strictness of the upper bound]\label{ex:strict-component-upper}
We construct a relation for which the current-state rate from \(a_0\) is zero, but the infimum in the upper bound of \eqref{eq:state-conditioned-basic-set-comparisons} is positive.  A path can approach a zero-cost basic set without reaching it in finitely many exact transitions.

Choose distinct states \(a_0,a_1,b_0\in K\).  Set
\[
A:=\{a_0,a_1\},
\qquad
B:=\{b_0\}.
\]
Choose pairwise distinct states
\[
z_n\in K\setminus(A\cup B),
\qquad z_n\to b_0,
\]
and set
\[
\Gamma
:=
\{(a_0,a_1),(a_1,a_0),(a_0,z_0)\}
\cup
\{(z_n,z_{n+1}):n\ge0\}
\cup
\{(b_0,b_0)\}.
\]
The relation is compact because \((z_n,z_{n+1})\to(b_0,b_0)\).

We claim that \(\mathcal R(\Gamma)=A\cup B\). The exact two-cycle makes every point of \(A\) chain recurrent, and the self-loop makes \(b_0\) chain recurrent. Points outside \(A\cup B\cup\{z_n:n\ge0\}\) have no successors. To exclude \(z_n\), fix \(n\ge0\), and set
\[
E_n:=\{z_0,\ldots,z_n\},
\qquad
F_n:=B\cup\{z_m:m\ge n+1\}.
\]
Choose
\[
0<\varepsilon<\min\{d_K(E_n,F_n),d_K(A,F_n)\}.
\]
As in Example~\ref{ex:strict-component-lower}, every nonterminal point after \(z_n\) in an \(\varepsilon\)-chain starting at \(z_n\) must lie in \(F_n\): it must have a successor, and the chosen separation excludes both \(E_n\) and \(A\). The final point cannot be \(z_n\in E_n\), so \(z_n\) is not chain recurrent. This proves the claim. Moreover, \(A\) and \(B\) are distinct basic sets: if
\[
Z:=B\cup\{z_n:n\ge0\},
\]
then \(d_K(A,Z)>0\), and for \(\varepsilon<d_K(A,Z)\) no \(\varepsilon\)-chain from \(b_0\) can reach \(A\).

Take \(p=a_0\).  From either state \(q\in A\), a path restricted to \(A\) alternates between \(a_0\) and \(a_1\), so
\[
\underline\alpha_{\phi,A}(q)
=\lim_{t\to\infty}\frac{\mathcal V_{t,A}^\phi(q)}{t}
=\frac12T_\phi(a_1\mid a_0)+\frac12T_\phi(a_0\mid a_1)>0,
\qquad q\in A.
\]

The admissible exit path
\[
a_0\to z_0\to z_1\to z_2\to\cdots
\]
has empirical transition measures converging to \(\delta_{(b_0,b_0)}\), because \((z_n,z_{n+1})\to(b_0,b_0)\) and the initial edge has vanishing empirical weight.  Since \(T_\phi\ge0\), continuity of \(T_\phi\) and \(T_\phi(b_0\mid b_0)=0\) imply
\[
\underline\alpha_\phi(a_0)=0.
\]
Moreover, \(z_n\to b_0\), so \(B\) is chain-accessible from \(a_0\).  It is not finitely exactly reachable from \(a_0\): after taking the exit edge, every exact path remains among the states \(z_n\).  Thus \(A\) is the only basic set finitely exactly reachable from \(a_0\), and both its states are reachable.  Consequently, the infimum in \eqref{eq:state-conditioned-basic-set-comparisons} equals
\[
\min_{q\in A}\underline\alpha_{\phi,A}(q)
=\frac12T_\phi(a_1\mid a_0)+\frac12T_\phi(a_0\mid a_1)
>\underline\alpha_\phi(a_0)=0.
\]
This proves strictness of the upper bound.

Using the same \(K\), equal-weight generator, and two-cycle states
as in Example~\ref{ex:strict-component-lower}, choose \(b_0=(1/2,1/2)\).
Then
\[
\underline\alpha_{\phi,A}(a_0)
=\underline\alpha_{\phi,A}(a_1)
=\log(5/4),
\qquad
\underline\alpha_\phi(a_0)=0.
\]
Thus the lower asymptotic rate \(\underline\alpha_\phi(a_0)\) is strictly smaller than the restricted lower asymptotic rate \(\underline\alpha_{\phi,A}(a_0)\). The exit path realizes the value zero by approaching \(b_0\) without reaching it in finitely many transitions.
\end{example}

\section{A Three-Asset Restricted Path Space with Positive Worst-Case Relative Growth}\label{app:three-asset-positive-growth}

The relation \(\Gamma_\varepsilon\) below bounds one-step changes but permits constant paths, so \(\gamma(\Phi,\Omega_{\Gamma_\varepsilon})=0\). We impose an additional lower bound on accumulated squared increments to define \(\Omega_{\rho,C}\subset\Omega_{\Gamma_\varepsilon}\). For the equal-weighted portfolio, this condition yields a positive global rate whenever \(\Omega_{\rho,C}\ne\varnothing\).

Let \(n=3\), let \(K\subset\Delta_3\) be compact and convex with nonempty interior relative to the affine hull of \(\Delta_3\), and fix \(\varepsilon>0\).  Define
\begin{equation}\label{eq:appB-local-relation}
\Gamma_\varepsilon
:=
\{(p,q)\in K\times K:\|q-p\|\le\varepsilon\},
\end{equation}
where \(\|\cdot\|\) denotes the Euclidean norm on \(\mathbb R^3\).  By convexity, \(q\leadsto_{\Gamma_\varepsilon}x\) for every \(q,x\in K\).

In this appendix, fix the equal-weighted generator
\[
\Phi(p):=(p_1p_2p_3)^{1/3},
\qquad
\phi:=\log\Phi,
\]
and let \(\pi=(1/3,1/3,1/3)\) be its associated portfolio.  For \(p,q\in K\), set temporarily
\[
r_i:=\frac{q_i}{p_i},
\qquad
\bar r:=\frac13\sum_{i=1}^3r_i.
\]
Then
\begin{equation}\label{eq:appB-ew-ratio-formula}
T_\phi(q\mid p)
=
\log \bar r
-
\frac13\sum_{i=1}^3\log r_i.
\end{equation}
Indeed,
\[
1+\langle\nabla\phi(p),q-p\rangle
=
\frac13\sum_{i=1}^3\frac{q_i}{p_i},
\qquad
\phi(q)-\phi(p)
=
\frac13\sum_{i=1}^3\log\frac{q_i}{p_i}.
\]

There is a constant \(c_K>0\) such that
\begin{equation}\label{eq:appB-ew-coercivity}
T_\phi(q\mid p)
\ge
c_K\|q-p\|^2,
\qquad p,q\in K.
\end{equation}
Since \(K\) is a compact subset of \(\Delta_3\), there is a finite constant \(M\), depending only on \(K\), such that \(r_i\le M\) for all \(p,q\in K\) and \(i=1,2,3\).  Since
\[
(\log x)''=-\frac1{x^2}\le-\frac1{M^2}
\qquad (0<x\le M),
\]
the second-order upper bound at \(\bar r\) gives, for each \(i\),
\[
\log r_i
\le
\log\bar r
+\frac{r_i-\bar r}{\bar r}
-\frac{(r_i-\bar r)^2}{2M^2}.
\]
Averaging over \(i\) cancels the linear term because \(\sum_i(r_i-\bar r)=0\).  Hence \eqref{eq:appB-ew-ratio-formula} yields
\begin{equation}\label{eq:appB-ratio-variance}
T_\phi(q\mid p)
\ge
\frac1{6M^2}
\sum_{i=1}^3(r_i-\bar r)^2.
\end{equation}
Because \(\sum_i p_ir_i=\sum_iq_i=1\),
\[
1-\bar r
=
\sum_{i=1}^3p_i(r_i-\bar r),
\]
and hence, by Cauchy--Schwarz,
\[
(1-\bar r)^2
\le
\sum_{i=1}^3(r_i-\bar r)^2.
\]
Also \(q_i-p_i=p_i(r_i-1)\) and \(p_i\le1\), so
\[
\|q-p\|^2\le\sum_{i=1}^3(r_i-1)^2
=\sum_{i=1}^3(r_i-\bar r)^2+3(\bar r-1)^2
\le4\sum_{i=1}^3(r_i-\bar r)^2.
\]
Together with \eqref{eq:appB-ratio-variance}, this proves \eqref{eq:appB-ew-coercivity}, for example with \(c_K=1/(24M^2)\).

For \(\rho>0\) and \(C\ge0\), let \(\Omega_{\rho,C}\subset\Omega_{\Gamma_\varepsilon}\) consist of paths satisfying
\begin{equation}\label{eq:appB-euclidean-increment-bound}
\sum_{k=s}^{s+t-1}
\|\omega_{k+1}-\omega_k\|^2
\ge
\rho t-C,
\qquad s\ge0,\quad t\ge1.
\end{equation}
The requirement for every starting time \(s\) ensures that \(\Omega_{\rho,C}\) is forward shift-invariant.

\begin{proposition}[Positive worst-case relative growth on \(\Omega_{\rho,C}\)]\label{prop:appB-positive-growth}
Suppose that \(\Omega_{\rho,C}\ne\varnothing\). Then \(\Omega_{\rho,C}\) is compact and forward shift-invariant. Moreover, for every \(t\ge1\),
\begin{equation}\label{eq:appB-finite-horizon-growth}
\min_{\omega\in\Omega_{\rho,C}}\log V_\pi(t,\omega)
\ge c_K(\rho t-C)-\bigl(\max_K\phi-\min_K\phi\bigr).
\end{equation}
Consequently,
\begin{equation}\label{eq:appB-positive-gamma}
\gamma(\Phi,\Omega_{\rho,C})\ge c_K\rho>0.
\end{equation}
\end{proposition}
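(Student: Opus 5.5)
The plan has three parts: the structural properties of \(\Omega_{\rho,C}\), the finite-horizon bound \eqref{eq:appB-finite-horizon-growth}, and then the limit.

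\emph{Compactness and forward shift-invariance.} Write \(\Omega_{\rho,C}\) as the intersection of \(\Omega_{\Gamma_\varepsilon}\) with the sets
\[
\Bigl\{\omega:\sum_{k=s}^{s+t-1}\|\omega_{k+1}-\omega_k\|^2\ge\rho t-C\Bigr\},\qquad s\ge0,\ t\ge1.
\]
Each of these sets is closed in \(K^{\mathbb N_0}\), because the defining sum is continuous and depends on finitely many coordinates. The relation \(\Gamma_\varepsilon\) is closed in \(K\times K\), hence compact, so \(\Omega_{\Gamma_\varepsilon}\) is compact. A closed subset of a compact set is compact, so \(\Omega_{\rho,C}\) is compact. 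For forward invariance, take \(\omega\in\Omega_{\rho,C}\). The shifted path \(\sigma\omega\) still has \(\Gamma_\varepsilon\)-admissible transitions. The constraint for \(\sigma\omega\) at starting time \(s\) is exactly the constraint for \(\omega\) at starting time \(s+1\), so \(\sigma\omega\in\Omega_{\rho,C}\). Together with the assumption \(\Omega_{\rho,C}\neq\varnothing\), this makes \(\Omega_{\rho,C}\) a restricted path space, and Theorem~\ref{thm:invariant-L-divergence-representation} guarantees that \(\gamma(\Phi,\Omega_{\rho,C})\) exists as a limit.

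\emph{Finite-horizon bound.} Fix \(\omega\in\Omega_{\rho,C}\) and \(t\ge1\). I would apply the wealth decomposition \eqref{eq:wealth-decomposition} and bound the endpoint term below by \(-(\max_K\phi-\min_K\phi)\). Each summand \(T_\phi(\omega_{k+1}\mid\omega_k)\) is then bounded below by \(c_K\|\omega_{k+1}-\omega_k\|^2\) using the coercivity estimate \eqref{eq:appB-ew-coercivity}. Finally, the constraint \eqref{eq:appB-euclidean-increment-bound} with \(s=0\) bounds the sum of squared increments by \(\rho t-C\) from below. Since \(V_\pi(t,\cdot)\) is continuous on the compact set \(\Omega_{\rho,C}\), its minimum is attained, and taking the minimum over \(\omega\) gives \eqref{eq:appB-finite-horizon-growth}.

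\emph{The limit.} Dividing \eqref{eq:appB-finite-horizon-growth} by \(t\) and letting \(t\to\infty\), the constant terms vanish and \eqref{eq:appB-positive-gamma} follows, with \(c_K\rho>0\) because \(c_K>0\) and \(\rho>0\).

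No step here is a real obstacle. The only points needing care are the closedness argument and remembering to invoke Theorem~\ref{thm:invariant-L-divergence-representation} for existence of the limit; the coercivity estimate has already been proved.
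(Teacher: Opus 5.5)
Your proposal is correct and follows the paper's proof essentially step for step. The paper also obtains compactness as an intersection of closed finite-coordinate constraint sets inside the compact space \(\Omega_{\Gamma_\varepsilon}\), gets forward invariance from the requirement over all starting times \(s\), derives the finite-horizon bound from the wealth decomposition, the coercivity estimate and the \(s=0\) increment constraint, and invokes Theorem~\ref{thm:invariant-L-divergence-representation} for existence of the limit. Your explicit index shift \(s\mapsto s+1\) and your remark that the minimum of \(V_\pi(t,\cdot)\) is attained are small details the paper leaves implicit.
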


\begin{proof}
The relation \(\Gamma_\varepsilon\) is compact, so \(\Omega_{\Gamma_\varepsilon}\) is compact. For each fixed \(s\ge0\) and \(t\ge1\), the left-hand side of \eqref{eq:appB-euclidean-increment-bound} depends continuously on finitely many coordinates. Hence \(\Omega_{\rho,C}\), as an intersection of closed subsets of \(\Omega_{\Gamma_\varepsilon}\), is compact. Forward shift-invariance follows from the requirement that \eqref{eq:appB-euclidean-increment-bound} hold for every \(s\ge0\).

For every \(\omega\in\Omega_{\rho,C}\), \eqref{eq:appB-ew-coercivity} and \eqref{eq:appB-euclidean-increment-bound} give
\[
\sum_{k=0}^{t-1}T_\phi(\omega_{k+1}\mid\omega_k)\ge c_K(\rho t-C).
\]
The endpoint term in the pathwise wealth decomposition satisfies
\[
\phi(\omega_t)-\phi(\omega_0)\ge\min_K\phi-\max_K\phi.
\]
Combining these bounds and taking the minimum over \(\Omega_{\rho,C}\) gives \eqref{eq:appB-finite-horizon-growth}. By Theorem~\ref{thm:invariant-L-divergence-representation}, the limit defining \(\gamma(\Phi,\Omega_{\rho,C})\) exists. Dividing by \(t\) and letting \(t\to\infty\) yields \eqref{eq:appB-positive-gamma}.
\end{proof}

For nonemptiness, choose distinct \(p^-,p^+\in K\) with \(\|p^+-p^-\|\le\varepsilon\).  The alternating path
\[
p^-,p^+,p^-,p^+,\ldots
\]
belongs to \(\Omega_{\rho,0}\) with
\[
\rho=\|p^+-p^-\|^2>0.
\]

\begingroup\small
\bibliographystyle{plain}
\bibliography{references}
\endgroup

\end{document}